\documentclass{amsart}

\usepackage{psfrag}
\usepackage{color}
\usepackage{tikz}
\usetikzlibrary{matrix,arrows}
\usepackage{graphicx,graphics}
\usepackage{amssymb,amsfonts,amsmath,amstext,amsthm,amscd,verbatim,enumerate}
\usepackage[T1]{fontenc}

\DeclareMathOperator{\dist}{dist}

\begin{document}

\newtheorem{theorem}{Theorem}[section]
\newtheorem{result}[theorem]{Result}
\newtheorem{fact}[theorem]{Fact}
\newtheorem{conjecture}[theorem]{Conjecture}
\newtheorem{lemma}[theorem]{Lemma}
\newtheorem{proposition}[theorem]{Proposition}
\newtheorem{corollary}[theorem]{Corollary}
\newtheorem{facts}[theorem]{Facts}
\newtheorem{props}[theorem]{Properties}
\newtheorem*{thmA}{Theorem A}
\newtheorem{ex}[theorem]{Example}
\theoremstyle{definition}
\newtheorem{definition}[theorem]{Definition}
\newtheorem{remark}[theorem]{Remark}
\newtheorem{example}[theorem]{Example}
\newtheorem*{defna}{Definition}

\newcommand{\notes} {\noindent \textbf{Notes.  }}
\newcommand{\defn} {\noindent \textbf{Definition.  }}
\newcommand{\defns} {\noindent \textbf{Definitions.  }}
\newcommand{\x}{{\bf x}}
\newcommand{\e}{\epsilon}
\renewcommand{\d}{\delta}
\newcommand{\z}{{\bf z}}
\newcommand{\B}{{\bf b}}
\newcommand{\V}{{\bf v}}
\newcommand{\T}{\mathbb{T}}
\newcommand{\Z}{\mathbb{Z}}
\newcommand{\Hp}{\mathbb{H}}
\newcommand{\D}{\Delta}
\newcommand{\R}{\mathbb{R}}
\newcommand{\N}{\mathbb{N}}
\renewcommand{\B}{\mathbb{B}}
\renewcommand{\S}{\mathbb{S}}
\newcommand{\C}{\mathbb{C}}
\newcommand{\ft}{\widetilde{f}}
\newcommand{\dt}{{\mathrm{det }\;}}
 \newcommand{\adj}{{\mathrm{adj}\;}}
 \newcommand{\0}{{\bf O}}
 \newcommand{\av}{\arrowvert}
 \newcommand{\zbar}{\overline{z}}
 \newcommand{\xbar}{\overline{X}}
 \newcommand{\htt}{\widetilde{h}}
\newcommand{\ty}{\mathcal{T}}
\newcommand\diam{\operatorname{diam}}
\renewcommand\Re{\operatorname{Re}}
\renewcommand\Im{\operatorname{Im}}
\newcommand{\tr}{\operatorname{Tr}}
\renewcommand{\skew}{\operatorname{skew}}

\newcommand{\ds}{\displaystyle}
\numberwithin{equation}{section}

\renewcommand{\theenumi}{(\roman{enumi})}
\renewcommand{\labelenumi}{\theenumi}

\newcommand{\note}[1]{\marginpar{\tiny #1}}
\newcommand{\vyron}[1]{{\scriptsize \color{blue}\textbf{Vyron's note:} #1 \color{black}\normalsize}}
\newcommand{\alastair}[1]{{\scriptsize \color{red}\textbf{Alastair's note:} #1 \color{black}\normalsize}}

\title{On uniformly disconnected Julia sets}

\author{Alastair N. Fletcher}
\email{fletcher@math.niu.edu}
\address{Department of Mathematical Sciences, Northern Illinois University, Dekalb, IL 60115, USA}

\author{Vyron Vellis}
\email{vvellis@utk.edu}
\address{Department of Mathematics, The University of Tennessee, Knoxville, TN 37966, USA}

\thanks{The first named author was supported by a grant from the Simons Foundation (\#352034, Alastair Fletcher). The second named author was partially supported by NSF DMS grant 1952510.}
\subjclass[2010]{Primary 30D05; Secondary 30C62,30C65, 30L10}
\keywords{Julia sets, hyperbolic maps, uniformly disconnected sets, self-similar Cantor sets}

\maketitle

\begin{abstract}
It is well-known that the Julia set of a hyperbolic rational map is quasisymmetrically equivalent to the standard Cantor set. Using the uniformization theorem of David and Semmes, this result comes down to the fact that such a Julia set is both uniformly perfect and uniformly disconnected. We study the analogous question for Julia sets of UQR maps in $\S^n$, for $n\geq 2$. Introducing hyperbolic UQR maps, we show that the Julia set of such a map is uniformly disconnected if it is totally disconnected. Moreover, we show that if $E$ is a compact, uniformly perfect and uniformly disconnected set in $\S^n$, then it is the Julia set of a hyperbolic UQR map $f:\S^N \to \S^N$ where $N=n$ if $n=2$ and $N=n+1$ otherwise.
\end{abstract}

\section{Introduction}

In \cite{DSbook}, David and Semmes introduced  a scale-invariant version of total disconnectedness towards a uniformization of all metric spaces that are quasisymmetric to the standard middle-third Cantor set $\mathcal{C}$: \emph{A set $X\subset \R^n$ is quasisymmetrically homeomorphic to $\mathcal{C}$ if and only if it is compact, uniformly disconnected and uniformly perfect.}

A rich source of Cantor set constructions in $\S^n$, for $n\geq 2$, arises from dynamics. As observed in \cite{HaPi}, if $f:\mathbb{S}^2 \to \mathbb{S}^2$ is a hyperbolic rational map for which the Julia set is totally disconnected, then $J(f)$ is quasisymmetrically equivalent to $\mathcal{C}$. 
Comparing with the uniformization result of David and Semmes, it is clear that $J(f)$ is compact. Moreover, it is well-known that $J(f)$ is uniformly perfect, see for example \cite{MaDR}. Hence the important property here is that for a hyperbolic rational map, if $J(f)$ is totally disconnected, then it is uniformly disconnected. Informally, this means that on all scales, the points of $J(f)$ do not cluster together too much, and is in some sense the opposite notion to uniform perfectness.

The condition that $f$ is hyperbolic cannot be dropped here. Every uniformly disconnected set $X \subset \R^2$ is porous and by a result of Luukainen \cite[Theorem 5.2]{Lu}, the Assouad dimension of $X$, and so also the Hausdorff dimension, is strictly less than $2$. However, Yang \cite{Ya} exhibited cubic polynomials with totally disconnected Julia set and Hausdorff dimension equal to $2$. In these examples, $J(f)$ contains a critical point.

In this paper, we explore the analogous situation in the context of uniformly quasiregular mappings in $\S^n$, for $n\geq 2$. For brevity we will call them UQR maps. This class of mappings is the correct generalization of complex dynamics to higher real dimensions, with a well developed theory. See Bergweiler's survey \cite{Be} and Martin's survey \cite{Martin-survey} for an introduction to the subject. Again it is clear that if $f:\mathbb{S}^n \to \mathbb{S}^n$ is UQR, then $J(f)$ is compact. Moreover, $J(f)$ is uniformly perfect \cite{FN}. So again the question comes down to the property of uniform disconnectedness.

Our first result shows that for hyperbolic UQR maps, totally disconnected implies uniformly disconnected. We make the definition for hyperbolic UQR maps in the preliminaries below, but it is the same as for rational maps: the Julia set must not intersect the post-branch set. This definition is new in the context of UQR maps, but as we note in section \ref{sec:prelims} the class is non-empty.

\begin{theorem}\label{thm:hyperbolic-UD}
Let $n\geq 2$. If $f:\S^n \to \S^n$ is a hyperbolic UQR map and the Julia set is totally disconnected, then it is uniformly disconnected. 
\end{theorem}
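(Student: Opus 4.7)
I would reduce uniform disconnectedness of $J(f)$ to two ingredients: a uniform expansion property of $f$ on $J(f)$, and a clopen decomposition of $J(f)$ at a single base scale. Total disconnectedness together with compactness supplies the second ingredient almost for free: for any small $s_0$, one obtains a finite clopen partition $J(f) = V_1 \sqcup \cdots \sqcup V_m$ with $\diam V_i < s_0$ and pairwise separation $\ge \eta > 0$, giving a disconnectedness constant $\eta/s_0$ at the single scale $s_0$. The real work is promoting this to every scale through inverse branches of expanding iterates.

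Hyperbolicity yields $\dist(J(f), \overline{\bigcup_{k\ge 1} f^k(B_f)}) > 0$, so in a neighborhood $W$ of $J(f)$ every iterate $f^m$ is a local homeomorphism; being an iterate of the UQR map $f$ it is also $K$-quasiregular with the \emph{same} constant $K$. The core step is to establish that there exist $N \in \N$, $\lambda > 1$, and $r_0 > 0$ such that for every $z \in J(f)$ and $0 < r \le r_0$, the restriction $f^N|_{B(z,r)}$ is an injective $K$-quasiconformal map with
\[
B(f^N(z), \lambda r) \subset f^N(B(z,r)).
\]
Iterating, $f^{Nk}|_{B(z,r)}$ remains a $K$-quasiconformal embedding whose image contains a ball of radius $\lambda^k r$ about $f^{Nk}(z)$, provided the intermediate radii stay below $r_0$. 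Given $x \in J(f)$ and $0 < r < r_0$, I would pick $k$ with $\lambda^k r \asymp s_0$, let $g$ be the branch of $(f^{Nk})^{-1}$ sending $y := f^{Nk}(x)$ back to $x$, and set $U := g(V_i)$, where $V_i$ is the piece of the base-scale partition containing $y$. Complete invariance of $J(f)$ forces $U$ to be clopen in $J(f)$, and Mori-type distortion bounds for the $K$-quasiconformal map $g$ (with $K$ independent of $k$, since $f^{Nk}$ is still $K$-quasiregular) convert the scale-$s_0$ separation of $V_i$ into a scale-$r$ separation of $U$, uniformly in $x$ and $r$.

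The main obstacle is the uniform expansion of $f^N$ on $J(f)$. For rational hyperbolic maps this is classical and rests on the hyperbolic metric of the Fatou set, a tool unavailable for UQR maps in dimension $\ge 3$. A UQR substitute should come from combining two facts: first, that forward iterates blow up any Julia neighborhood to definite size (via Montel/normal-family arguments in the Fatou set together with complete invariance of $J(f)$); second, that quasiconformal modulus estimates, applied to branches of $(f^m)^{-1}$ on the hyperbolic neighborhood $W$, force these branches to contract uniformly as $m \to \infty$. A compactness argument on $J(f)$ should then extract the uniform constants $N$ and $\lambda$. Once the expansion is available, the remaining steps are the standard distortion-pullback scheme adapted from two-dimensional complex dynamics to the $K$-quasiregular setting.
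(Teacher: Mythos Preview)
Your strategy is sound and, at the structural level, coincides with the paper's: blow up a small ball $B(x,r)$ in $J(f)$ to a fixed macroscopic scale by a forward iterate that is $K$-quasiconformal (hence $\eta$-quasisymmetric with $\eta=\eta(K,n)$ via the egg-yolk principle), then pull back a base-scale clopen partition of $J(f)$ through the inverse branch. The paper packages the first half as the statement that $J(f)$ is \emph{quasi-self-similar}, and proves separately the purely metric lemma that a compact, uniformly perfect, totally disconnected, quasi-self-similar space is uniformly disconnected; your argument merges these two steps.

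The one substantive difference is how you reach macroscopic scale. You propose a fixed iterate $f^N$ with a uniform expansion factor $\lambda>1$ on all balls $B(z,r)$, $r\le r_0$, and then iterate $k$ times until $\lambda^k r\asymp s_0$. As you note, producing such $N,\lambda$ in the UQR category (with no hyperbolic metric available) is the hard step, and your sketch via modulus estimates and compactness is plausible but not yet a proof; one also has to control the \emph{upper} diameter of $f^{jN}(B(z,r))$ to keep the iteration inside the injectivity region. The paper sidesteps this entirely: after replacing $f$ by $g=f^N$ so that $\overline{g^{-1}(U)}\subset U$ for a small neighborhood $U$ of $J(f)$, it uses the blow-up property of Julia sets (forward orbits of open sets cover $\S^n\setminus\mathcal{E}(f)$) to choose, for each $x$ and $r$, the \emph{minimal} $M$ with $L_{g^M}(x,2r)\ge\delta$. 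By minimality all intermediate images stay in $U$, so $g^M$ is injective on $B(x,2r)$; being $K$-quasiregular with $K$ independent of $M$, it is $\eta$-quasisymmetric on $B(x,r)$ with $\eta$ independent of $M$ as well. Thus the paper never needs a per-step expansion constant $\lambda>1$: the UQR hypothesis does all the work by keeping the distortion of $g^M$ uniform no matter how large $M$ is. Your approach would ultimately succeed, but the paper's variable-$M$ device is what turns your ``main obstacle'' into a non-issue.
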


Therefore, by the uniformization result of David and Semmes, if $f:\mathbb{S}^n \to \mathbb{S}^n$ is a hyperbolic UQR map, then $J(f)$ is quasisymmetrically equivalent to $\mathcal{C}$. Note, however, that this does not mean that $J(f)$ is ambiently homeomorphic to $\mathcal{C}$ since there do exist hyperbolic UQR maps for which $J(f)$ is a wild Cantor set, see \cite{FlWu}.

The next result addresses the converse question of when a uniformizable totally disconnected subset of $\S^n$ is a Julia set of a hyperbolic UQR map.

\begin{theorem}\label{thm:UD-hyperbolic}
Let $n\geq 2$. If $E\subset \S^n$ is a compact, uniformly perfect and uniformly disconnected set, then it is the Julia set of a hyperbolic UQR map $f:\S^{N} \to \S^{N}$, where $N=2$ if $n=2$ and $N=n+1$ if $n\geq 3$.
\end{theorem}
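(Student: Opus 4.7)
By the David--Semmes theorem, $E$ is quasisymmetric to the middle-thirds Cantor set $\mathcal{C}$. The approach will be two-phase: first produce a ``model'' hyperbolic UQR map $g_0:\S^N\to\S^N$ whose Julia set $E_0$ is a Cantor set quasisymmetric to $\mathcal{C}$; then exhibit a quasiconformal self-homeomorphism $\Phi$ of $\S^N$ with $\Phi(E_0)=E$ and set $f:=\Phi\circ g_0\circ \Phi^{-1}$. Since conjugation by a quasiconformal homeomorphism preserves the UQR class, the Julia set and the post-branch set, the resulting $f$ will be hyperbolic UQR with $J(f)=E$.

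\emph{Step 1: the model map.} For $n=2$ the plan is to take $g_0(z)=z^2-c$ with $|c|$ large, which is a hyperbolic rational map whose Julia set $E_0\subset\S^2$ is a Cantor set; by Theorem~\ref{thm:hyperbolic-UD} (together with uniform perfectness) $E_0$ is quasisymmetric to $\mathcal{C}$. For $n\geq 3$ I would construct $g_0$ in $\S^{n+1}$ by a Schottky-type recipe: pick two disjoint closed balls $B_0,B_1\subset\S^{n+1}$ and two M{\"o}bius contractions $\psi_0,\psi_1$ with $\psi_j(\overline{B_0\cup B_1})\subset B_j$; define $g_0:=\psi_j^{-1}$ on $\psi_j(\overline{B_0\cup B_1})$ for $j=0,1$ and fill in the complement by a quasiregular branched cover of $\S^{n+1}$. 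A conformal trap placed away from the Julia set controls the distortion of all iterates and gives the UQR property. The Julia set $E_0$ is then the attractor of $\{\psi_0,\psi_1\}$, a round Cantor set which is uniformly perfect and uniformly disconnected, hence quasisymmetric to $\mathcal{C}$.

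\emph{Step 2: extension and conjugation.} By transitivity of quasisymmetric equivalence we obtain a quasisymmetry $\phi_0:E_0\to E$. The decisive step is to extend $\phi_0$ to a quasiconformal self-homeomorphism $\Phi$ of $\S^N$. For $N=2$, I would appeal to a planar extension theorem for quasisymmetric maps between uniformly disconnected, uniformly perfect compact subsets of $\S^2$, proved by extending $\phi_0$ across the (single) complementary component of $E_0$ in the spirit of MacManus- or Beurling--Ahlfors-type constructions. For $N=n+1$ with $n\geq 3$, the plan is to exploit the ambient codimension by extending across the two topological balls $\S^{n+1}\setminus\S^n$: even though $E$ may be wildly embedded in $\S^n$, a Tukia--V{\"a}is{\"a}l{\"a} style inductive extension across Whitney decompositions of these balls, using the David--Semmes regularity of both $E_0$ and $E$, should produce the required quasiconformal $\Phi$. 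Pushing $g_0$ forward by $\Phi$ yields $f$; since $\Phi$ sends the post-branch set of $g_0$ to that of $f$ and the former is disjoint from $E_0$, hyperbolicity is preserved and $J(f)=\Phi(E_0)=E$.

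\emph{Main obstacle.} The hard part is the ambient extension in the case $n\geq 3$: since Cantor sets in $\S^n$ for $n\geq 3$ can be genuinely wild, no extension of $\phi_0$ to $\S^n$ itself can exist in general, and the dimensional jump to $\S^{n+1}$ is precisely what makes a quasiconformal extension feasible. A secondary technical point is arranging the model $g_0$ in Step~1 so that the patching on $\S^{n+1}\setminus(\psi_0(B)\cup\psi_1(B))$ places the entire post-branch set at a definite distance from $E_0$; this is essential for hyperbolicity to transfer under $\Phi$.
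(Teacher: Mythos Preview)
Your plan is essentially the paper's proof: build a model hyperbolic UQR map on $\S^N$ with Cantor Julia set, produce a global quasiconformal $\Phi:\S^N\to\S^N$ carrying that Julia set onto $E$, and conjugate. What you flag as the ``main obstacle'' is in fact an existing theorem---MacManus for $N=2$ and the second author's extension result \cite{Vext} for $N=n+1$ when $n\ge 3$---and the paper's model map (uniformly for all $N\ge 2$) is the Iwaniec--Martin conformal-trap construction, whose hyperbolicity it checks directly; so both of your ``to be worked out'' steps are already in the literature.
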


One of the tools used in the proof of this result is the conformal trap method \cite{IM}. This yields a hyperbolic UQR map $G:\S^N \to \S^N$ with $J(G)$ equal to the standard Cantor set $\mathcal{C}$. Consequently, if $F:\S^N\to \S^N$ is a quasiconformal map, then $F(\mathcal{C})$ is a Cantor set that also arises as a Julia set of a UQR map. This UQR map is just a conjugate of $G$.

This idea gives one way of improving Theorem \ref{thm:UD-hyperbolic}. Following \cite[Definition 1.2]{Fl}, we say that an iterated function system (IFS) $\mathcal{F} = \{ \phi_1,\dots,\phi_n\}$ of contracting similarities has the \emph{strong ball open set condition} if there exists a topological ball $B \subset \R^3$ such that $\phi_1(\overline{B}),\dots,\phi_n(\overline{B})$ are mutually disjoint and contained in $B$. Here by topological ball we mean the image of $\mathbb{B}^3$ under a homeomorphism of $\R^3$.

\begin{theorem}\label{thm:UD-R3} If $X$ is the attractor of an IFS satisfying the strong ball open set condition, then $X$ is the image of $\mathcal{C}$ under a quasiconformal homeomorphism of $\R^3$. In particular, $X$ is the Julia set of a hyperbolic UQR map of $\R^3$.
\end{theorem}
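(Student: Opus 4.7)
I would approach Theorem~\ref{thm:UD-R3} by proving that the attractor $X$ is the image of $\mathcal{C}$ under a quasiconformal self-map of $\R^3$; the "In particular" claim then follows at once by conjugation of a conformal-trap Julia set. The strategy naturally splits into (a) reducing to a reference attractor $X_0$ lying on a line, which is directly quasiconformally equivalent to $\mathcal{C}$, and (b) building a global quasiconformal self-map of $\R^3$ that takes $X_0$ to $X$.

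For (a), I would pick similarities $\psi_1,\dots,\psi_n$ of $\R^3$ with the same ratios as $\phi_1,\dots,\phi_n$, whose fixed points are spaced along the $x$-axis so that $\mathcal{F}_0=\{\psi_i\}$ satisfies the strong ball open set condition with respect to a round ball $B_0$. Its attractor $X_0$ then lies on the $x$-axis, is uniformly perfect and uniformly disconnected by the IFS structure, and by David--Semmes is quasisymmetrically equivalent to $\mathcal{C}$. Because both sets lie in a $1$-dimensional subspace of $\R^3$, the Tukia--V\"{a}is\"{a}l\"{a} extension theorem promotes this quasisymmetry to a quasiconformal homeomorphism $G:\R^3\to\R^3$ with $G(\mathcal{C})=X_0$.

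For (b), write $A_0=\overline{B_0}\setminus\bigcup_i\psi_i(B_0)$ and $A=\overline{B}\setminus\bigcup_i\phi_i(B)$ for the fundamental annular pieces. Given a base quasiconformal homeomorphism $h:A_0\to A$ with boundary data matching the similarities, I would define $H|_{\psi_w(A_0)}:=\phi_w\circ h\circ\psi_w^{-1}$ for every finite word $w$, extend $H$ on the exterior $\overline{\R^3\setminus B_0}$ by a quasiconformal extension of $h|_{\partial B_0}$, and define $H$ on $X_0$ by continuity (using $\diam\psi_w(B_0)\to 0$). Because each $\psi_w,\phi_w$ is a $1$-quasiconformal similarity and $X_0$ has measure zero, the resulting $H$ is globally $K$-quasiconformal with $K$ controlled only by $h$ and the exterior extension, and $H(X_0)=X$. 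Setting $F:=H\circ G$ yields $F(\mathcal{C})=X$.

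The main obstacle is constructing the base map $h:A_0\to A$. While $\partial A_0$ consists of round $2$-spheres, the outer boundary $\partial B$ and inner boundaries $\partial\phi_i(B)$ of $A$ may fail to be quasisymmetric $2$-spheres in $\R^3$, so the boundary data cannot be straightened globally by any quasiconformal self-map of $\R^3$. The construction must instead exploit the hypothesis that $B$ is the image of $\mathbb{B}^3$ under a homeomorphism of $\R^3$, which guarantees that $\partial B$ is bicollared, together with the positive gaps $\dist(\phi_i(\overline{B}),\partial B)>0$ provided by the strong ball open set condition; dimension three enters crucially via Moise--Bing-type tameness results for bicollared $2$-spheres, allowing the wildness of $\partial B$ to be absorbed into a compactly supported deformation. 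Once $h$, $H$, and $F$ are produced, the "In particular" statement follows from the conformal trap method of \cite{IM}: if $G_0:\S^3\to\S^3$ is a hyperbolic UQR map with $J(G_0)=\mathcal{C}$, then $FG_0F^{-1}$ is a hyperbolic UQR map of $\R^3$ with Julia set $F(\mathcal{C})=X$.
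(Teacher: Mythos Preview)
Your overall architecture coincides with the paper's: pass through an auxiliary Cantor set on a line, then manufacture a global quasiconformal map by propagating a single base map on a fundamental ``annulus'' through the IFS via $\phi_w\circ h\circ\psi_w^{-1}$, and finally extend outside and across the attractor using removability. Two concrete points, however, need repair.

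First, your step~(a) asserts that the line model $\{\psi_i\}$ can be chosen with the \emph{same} ratios $r_i$ as the $\phi_i$. This is false in general: the strong ball open set condition in $\R^3$ permits $\sum_i r_i>1$ (e.g.\ eight similarities of ratio just under $1/2$ arranged like the octants of a cube), and then the balls $\psi_i(\overline{B_0})$ cannot sit disjointly along a diameter of a round $B_0$. The paper avoids this by taking all $\psi_i$ with common ratio $1/(2n-1)$; nothing in the conjugation formula needs matching ratios, since similarities are $1$-quasiconformal regardless of scale, and the boundary compatibility $h\circ\psi_i=\phi_i\circ h$ on $\partial B_0$ can still be imposed.

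Second, and more seriously, your plan for the base map $h:A_0\to A$ is not yet a proof. You correctly flag that $\partial B$ may be a wild $2$-sphere and invoke ``Moise--Bing-type tameness,'' but tameness yields only a \emph{topological} straightening of $\partial B$, not a quasiconformal one, and ``absorb the wildness into a compactly supported deformation'' does not by itself produce a QC map between the annuli. The paper's resolution is to \emph{replace} $B$ rather than straighten it: Munkres' theorem that orientation-preserving homeomorphisms of $\R^3$ can be approximated by diffeomorphisms furnishes a ball $B''\subset B$ with smooth boundary so close to $\partial B$ that each $\phi_i(\overline{B})$---and hence each $\phi_i(\overline{B''})$---still lies in $B''$; by uniqueness of the attractor, $X$ is unchanged. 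Now all boundary spheres in sight are smooth, and Sullivan's Annulus Theorem (which requires diffeomorphic boundary data) can be applied, together with some auxiliary cube-and-ball interpolations, to assemble $h$ quasiconformally. This smoothing-then-Sullivan step is precisely where dimension three enters, and it is the missing mechanism in your outline.
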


This result extends \cite[Theorem 1.3]{Fl} from two to three dimensions since the only conformal contractions of $\R^3$ are similarities \cite{IM2}. As will be clear from the proof of Theorem \ref{thm:UD-R3}, the key obstruction to increasing this result to higher dimensions is the lack of results approximating orientation-preserving homeomorphisms by orientation-preserving diffeomorphisms.

We remark that Theorem \ref{thm:UD-R3} is not true if  $X$ is only assumed to be tame, uniformly disconnected and uniformly perfect.

\begin{proposition}
\label{prop:qss}
There exists a compact, uniformly perfect and uniformly disconnected set $X\subset \R^3$ such that $X$ is ambiently homeomorphic to $\mathcal{C}$ but not ambiently quasiconformal to $\mathcal{C}$.  
\end{proposition}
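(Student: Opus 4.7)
The plan is to construct $X$ as a countable union of disjoint metric copies of $\mathcal{C}$ accumulating at a single limit point $p\in\R^3$, where the individual copies are geometrically embedded in $\R^3$ so that a single ambient quasiconformal self-map of $\R^3$ cannot accommodate all of them simultaneously. Fix a sequence $r_n\searrow 0$ and choose disjoint closed balls $B_n\subset \R^3$ of radius $r_n$ whose centers converge to $p$. Inside each $B_n$, place a scaled copy $X_n$ of a ``block'' $Y_n\subset[0,1]^3$ that is bi-Lipschitz to $\mathcal{C}$ with bi-Lipschitz constant uniform in $n$, but whose embedding in $[0,1]^3$ carries a geometric obstruction whose strength grows with $n$; then set $X=\{p\}\cup\bigsqcup_n X_n$.

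For the obstruction, my plan is to distinguish the blocks $Y_n$ from $\mathcal{C}$ through a scale-invariant conformal modulus. Arrange each $Y_n$ to split into two ``halves'' $Y_n^{\pm}$ of comparable diameter, each a scaled copy of $\mathcal{C}$, positioned so that the $3$-modulus $m_n := \operatorname{mod} \Gamma(Y_n^+,Y_n^-;\, (0,1)^3\setminus Y_n)$ tends to $0$ as $n\to\infty$. For the standard Cantor set $\mathcal{C}$, the analogous modulus between the two ``sides'' of any level-$k$ cylinder is uniformly bounded between two positive constants by self-similarity. Because $3$-modulus is quasi-invariant under $K$-quasiconformal maps of $\R^3$ (up to a factor depending only on $K$), any ambient $K$-quasiconformal map of $\R^3$ taking $\mathcal{C}$ onto $X$ would have to absorb the $m_n$ simultaneously for all large $n$, which is impossible.

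The remaining verifications should be routine. Uniform perfectness and uniform disconnectedness of $X$ follow from the uniform metric constants of the blocks $X_n$ together with the well-separated, shrinking arrangement of the $B_n$'s, since the limit point $p$ inherits both properties from the scales $r_n$. Ambient tameness of $X$ follows because each $X_n$ is tame in $\R^3$ (being bi-Lipschitz equivalent to a standard Cantor set) and a countable union of disjoint tame Cantor sets converging to a single point is tame by a standard shrinking-decomposition argument. The main obstacle is the engineering step of producing blocks $Y_n$ whose modulus $m_n$ degenerates in the prescribed fashion while simultaneously preserving uniform metric constants. Because the complement of a Cantor set in $\R^3$ is simply connected, no purely topological linking obstruction is available; the modulus degeneracy must be forced by the ambient shape of $Y_n$, for instance by placing $Y_n^+$ and $Y_n^-$ in configurations where any curve family joining them is forced to pass through long thin ``corridors'' lined with denser and denser subclusters of $Y_n$ itself, acting as a quantitative screen that lowers $m_n$ without violating the uniform metric bounds.
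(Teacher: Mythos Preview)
The heart of your plan---forcing $m_n\to 0$---cannot work by the mechanism you describe. Each $Y_n$ is bi-Lipschitz to $\mathcal{C}$ and hence has Hausdorff dimension $\log 2/\log 3<2$, so $\mathcal{H}^{2}(Y_n)=0$. Any compact set in $\R^3$ of vanishing $\mathcal{H}^{2}$-measure is a null set for the $3$-modulus: the family of curves meeting it has zero modulus, and deleting it from the ambient domain leaves every curve-family modulus unchanged. Thus
\[
m_n=\operatorname{mod}\Gamma\bigl(Y_n^+,Y_n^-;(0,1)^3\setminus Y_n\bigr)=\operatorname{mod}\Gamma\bigl(Y_n^+,Y_n^-;(0,1)^3\bigr),
\]
and the ``denser and denser subclusters of $Y_n$'' you invoke as a screen are invisible to the modulus. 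What remains is the bare modulus between two uniformly perfect sets of diameter and separation uniformly comparable to~$1$; you offer no mechanism for making \emph{that} degenerate, and the uniform capacity-density estimate that accompanies uniform perfectness indicates it does not. (As an aside, the line ``$X_n$ is tame, being bi-Lipschitz equivalent to $\mathcal{C}$'' is also false: bi-Lipschitz equivalence is intrinsic and says nothing about the embedding---Antoine's necklace is quasisymmetric to $\mathcal{C}$ but wild.)

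More importantly, you dismiss linking too quickly. You are right that $\R^3\setminus X$ is simply connected when $X$ is tame, but the obstruction can live in a \emph{defining sequence} for $X$ rather than in $X$ itself. The paper alternates two IFS's: at the sparse levels $k=2^m$ it uses disjoint round balls (forcing $X$ to be tame), while on each stretch $2^m<k<2^{m+1}$ it uses the Antoine similarities, so the level-$k$ approximation consists of linked solid tori over arbitrarily many consecutive levels. If a $K$-quasiconformal $f:\R^3\to\R^3$ carried $X$ onto $\mathcal{C}\subset\R\times\{(0,0)\}$, then for any two adjacent level-$(2^m{+}1)$ pieces $A_1,A_2$ of $X$ one could pull back disjoint round balls around $f(A_1),f(A_2)$ to obtain disjoint topological balls $B_1',B_2'$ in $\R^3$ whose boundaries lie at distance comparable (with constants depending only on $K$) to $\diam A_1$ from the respective $A_i$. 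For $m$ large there is an Antoine level $l$ with $2^m<l<2^{m+1}$ whose solid tori around $A_1$ and around $A_2$ already sit inside $B_1'$ and $B_2'$; but those two chains of tori are linked, contradicting $B_1'\cap B_2'=\emptyset$. Linking at intermediate scales of the approximation is precisely the ingredient your outline rules out.
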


In a forthcoming paper, the first named author and Stoertz \cite{FS} show that if the Julia set of a hyperbolic UQR map $f:\S^3 \to \S^3$ is a Cantor set, then it has finite genus, that is, there exists a defining sequence comprised of handlebodies with uniformly bounded genus. Moreover, if there exists a point of the Julia set with local genus $g$, then the set of points with local genus $g$ is dense in the Julia set. However, a quasisymmetric image of $\mathcal{C}$ embedded in $\S^3$ need not have this property, for example, the union of an Antoine's necklace with a tame Cantor set separated by a hyperplane. Consequently, not all quasisymmetric images of $\mathcal{C}$ arise as Julia sets of hyperbolic UQR maps $f:\S^3 \to \S^3$. Further work in this direction could ask for a classification of the geometry of totally disconnected Julia sets for UQR maps which are not hyperbolic, or even if there are non-hyperbolic UQR maps for which the Julia set is totally disconnected. It may be worth pointing out here that while $z\mapsto z^d$ is a hyperbolic rational map, the UQR analogues of these constructed in \cite{Ma} are not hyperbolic. This is because the branch set consists of rays from $0$ to infinity, but the Julia set is the unit sphere in $\R^n$.

The paper is organized as follows. In section \ref{sec:prelims} we recall the basics of UQR maps and introduce hyperbolic UQR maps. We also recall some of the geometric notions we will need. In section \ref{sec:proof1} we prove Theorem \ref{thm:hyperbolic-UD}, in section \ref{sec:proof2} we prove Theorem \ref{thm:UD-hyperbolic} and in section \ref{sec:3} we prove Theorem \ref{thm:UD-R3} and Proposition \ref{prop:qss}.

We would like to thank Peter Haissinsky for helpful comments on the uniform disconnectedness of hyperbolic rational maps. We also thank the anonymous referee for their comments that improved the exposition of the paper.

\section{Preliminaries}
\label{sec:prelims}

We denote by $B(x,r)$ the (open) ball in a metric space $X$ centered at $x\in X$ and of radius $r$. 
For $n\geq 2$ we identify $\R^n \cup \{ \infty \}$ with $\S^n$ and use the chordal metric.
If $X=\S^n$ and we want to emphasize the dimension, we write $B^n(x,r)$.

\subsection{Quasiregular maps}

A continuous map $\Omega: E \to \R^n$ defined on a  domain $\Omega\subset \R^n$ is called \emph{quasiregular}  if $f$ belongs to the Sobolev space $W^{1,n}_{\text{loc}}(E)$ and if there exists some $K\geq 1$ such that 
\[ |f'(x)|^n \leq K J_f(x) \qquad \text{for a.e. $x\in E$}.\]
Here $J_f$ denotes the Jacobian of $f$ at $x\in E$ and $|f'(x)|$ the operator norm. The smallest such $K$ for which this inequality holds is called the outer dilatation and denoted $K_O(f)$. If $f$ is quasiregular, then we also have
\[ J_f(x) \leq K' \min _{|h|=1}  |f'(x)(h)| \qquad \text{ for a.e. $x\in E$}.\]
The smallest $K'$ for which this inequality holds is called the inner dilatation and denoted $K_I(f)$. Then the maximal dilatation of a quasiregular map $f$ is $K(f) = \max \{K_O(f) , K_I(f) \}$. We then say that $f$ is $K(f)$-quasiregular. The \emph{branch set} $\mathcal{B}(f)$ of a quasiregular map $f: E \to \R^n$ is the the closed set of points in $E$ where $f$ does not define a local homeomorphism. See Rickman's monograph \cite{Rickman} for an exposition on quasiregular mappings. 

Quasiregular mappings can be defined at infinity and also take on the value infinity. To do this, if $A:\S^n \to \S^n$ is a M\"obius map with $A(\infty) = 0$, then we require $f\circ A^{-1}$ or $A\circ f$ respectively to be quasiregular via the definition above.

If $f$ is quasiregular and a homeomorphism, then we say that $f$ is \emph{quasiconformal}. Quasiconformality is a generalization of conformality, while quasiregularity is a generalization of holomorphicity.  A notion stronger than that of quasiconformality (and better adapted to a general metric space setting) is that of quasisymmetry. A homeomorphism $f:(X,d) \to (Y,d')$ between metric spaces is \emph{quasisymmetric} if there exists a homeomorphism $\eta: [0,+\infty) \to [0,+\infty)$ such that 
\[ \frac{d'(f(x),f(a))}{d'(f(x),f(b))} \leq \eta\left( \frac{d(x,a)}{d(x,b)}\right) \qquad\text{for all $x,a,b \in X$ with $x\neq b$}.\]
If we want to emphasize the distortion function $\eta$, we say that $f$ is \emph{$\eta$-quasisymmetric}.

\subsection{UQR mappings}

The composition of two quasiregular mappings is always quasiregular but the maximal dilatation typically increases \cite[Theorem II.6.8]{Rickman}. A quasiregular map $f$ is \emph{uniformly quasiregular} (abbv. \emph{UQR}) if there exists $K\geq 1$ such that for every $m\in\N$, the $m$-th iterate $f^{m} = f\circ\cdots\circ f$ is $K$-quasiregular.

If $f:\S^n \to \S^n$ is UQR, then the Fatou set of $f$ is the set
\[ F(f) = \{x\in\S^n : (f^{m}|_U)_{m=1}^{\infty}\text{ is a normal family for some open set $U\ni x$}\}\]
and the Julia set of $f$ is the set $J(f)=\S^n \setminus F(f)$.

In the following proposition, we record some properties of Julia sets of UQR mappings on $\S^n$ that we will need for our proofs. For a map $f:\S^n \to \S^n$ and a point $x\in\S^n$, recall the backward orbit $O^-(x) = \{ y : f^m(y)=x, m\in \N \}$ and the forward orbit $O^+(x) = \{ f^m(x) : m\geq 0 \}$. 

\begin{proposition}
\label{prop:uqr}
Let $n\geq 2$ and suppose that $f:\S^n \to \S^n$ is UQR. Then:
\begin{enumerate}[(i)]
\item $J(f)$ is closed.
\item If $g=f^m$, then $J(g) = J(f)$.
\item $J(f)$ and its complement $F(f)$ are completely invariant under $f$.
\item The exceptional set $\mathcal{E}(f)$ (the set consisting of all points with finite backward orbit) is a finite set. Moreover, if $U$ is any open set intersecting $J(f)$, the forward orbit $O^+(U) = \bigcup_{x\in U}O^+(x)$ contains $\S^n \setminus \mathcal{E}(f)$.
\item For any $x\in \S^n$, the closure of the backward orbit $\overline{O^-(x)}$ contains $J(f)$. If $x\in J(f)$, then it equals $J(f)$.
\item $J(f)$ is uniformly perfect.
\end{enumerate}
\end{proposition}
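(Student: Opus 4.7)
The plan is to treat each item in turn, noting that (i)--(v) are direct adaptations of the classical rational dynamics arguments, which carry over to the UQR setting because the relevant tools (a Montel-type normality theorem and the fact that compositions of UQR maps stay within a fixed dilatation class) are available. Item (vi) is simply a citation to \cite{FN}, so the bulk of the work is (iv) and consolidating the rest.

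For (i), I would observe that $F(f)$ is open by its very definition (membership is witnessed by an open neighbourhood), so $J(f)=\S^n\setminus F(f)$ is closed. For (ii), I would note that since $(f^m)^k = f^{mk}$, normality of $(f^k|_U)$ forces normality of the subsequence $(f^{mk}|_U) = (g^k|_U)$, so $F(f)\subseteq F(g)$; conversely, on a neighbourhood where the $g$-iterates are normal, the full sequence $(f^k|_U)$ splits into $m$ subsequences $(f^j\circ g^k|_U)$, each of which is normal because $f^j$ is continuous and $K$-quasiregular, and hence the whole sequence is normal. This gives $F(f)=F(g)$. For (iii), complete invariance of $F(f)$ (and hence of $J(f)$) follows in one direction from continuity of $f$ and in the other from the openness of UQR maps plus the fact that the branch set is small: if $U$ is a neighbourhood of $x$ on which the iterates are normal, then $f(U)$ is a neighbourhood of $f(x)$ such that the iterates on $f(U)$ are normal because they are obtained by restricting the iterates on $U$ along local inverses.

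The main step, and what I expect to be the technical heart, is (iv). The goal here is the UQR analogue of the rational-dynamics fact that the exceptional set is finite and that forward orbits of open sets meeting the Julia set cover everything outside $\mathcal{E}(f)$. The strategy is: first, if $O^-(x)$ is finite then every point in $O^-(x)$ is periodic under $f$ and maps, up to a quasiconformal change of coordinates at the periodic point, like an expanding quasiregular map with a totally invariant point; a Zorich-type argument (or direct appeal to Hinkkanen--Martin's version of Montel's theorem for UQR maps, which is the UQR replacement for Montel's three-point theorem) then shows there can be only finitely many such points, since each forces a branch-locus condition at $x$ that cannot be realised too often. Second, for the covering claim, given any open $U$ meeting $J(f)$, non-normality of $(f^k|_U)$ combined with the UQR Montel theorem implies that $\bigcup_k f^k(U)$ omits only points of $\mathcal{E}(f)$: otherwise, pulling back a normal family on the complement gives normality on $U$.

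For (v), I would deduce both statements from (iv): the closure of any backward orbit is a closed, backward-invariant set, and by (iv) its forward iterates fill up $\S^n\setminus \mathcal{E}(f)$, so it must meet $J(f)$; then Montel shows it contains all of $J(f)$, and when $x\in J(f)$ the exceptional set is disjoint from $O^-(x)$ so one gets equality. Finally, (vi) is precisely the main result of \cite{FN}, so I would simply cite it. The most delicate step remains the finiteness in (iv), where the standard rational-map proof via fundamental domains does not transfer directly; I would lean on the UQR Montel theorem rather than attempt a covering-space argument.
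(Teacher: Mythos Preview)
The paper does not actually prove this proposition: it simply records that (i)--(v) can be found in Bergweiler's survey \cite{Be} and that (vi) is the main result of \cite{FN}. So the ``paper's proof'' is a pure citation, and your proposal is already far more detailed than what the paper provides.

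That said, your sketch is uneven. Items (i), (ii), (iii), (vi) are fine and are exactly the standard arguments. The weak point is (iv). Your plan for the finiteness of $\mathcal{E}(f)$ is too vague: ``a Zorich-type argument \dots\ shows there can be only finitely many such points, since each forces a branch-locus condition at $x$ that cannot be realised too often'' is not an argument. In the UQR setting the finiteness of $\mathcal{E}(f)$ is obtained from the quasiregular analogue of Picard's theorem (Rickman's theorem, giving a bound $q(n,K)$ on the number of omitted values) or, equivalently, from the Miniowitz normality criterion for families of $K$-quasiregular maps; this is precisely the ``UQR Montel theorem'' you allude to for the second half of (iv), and it should be invoked explicitly for the first half as well. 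Once that is in place, your deduction of the covering statement in (iv) and of (v) from (iv) is the standard route. Also note a small wrinkle in (v): as literally stated, the first sentence fails for $x\in\mathcal{E}(f)$ (whose backward orbit is finite and lies in $F(f)$); the intended statement, and what the cited references prove, is for $x\notin\mathcal{E}(f)$, which is automatic when $x\in J(f)$.
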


The proof of the first five of these properties can be found in \cite{Be}. The final property is from \cite{FN}.

We now introduce the notion of a hyperbolic UQR map.

\begin{definition}
Let $n\geq 2$ and let $f:\S^n \to \S^n$ be a non-injective UQR map.
\begin{enumerate}[(i)]
\item The \emph{post-branch set} of $f$ is
\[ \mathcal{P}(f)=\overline{\{ f^m(\mathcal{B}(f) ) : m\geq 0 \}}.\]
\item The map $f$ is called {\it hyperbolic} if $J(f) \cap \mathcal{P}(f)$ is empty.
\end{enumerate}
\end{definition}

This definition is the obvious analogue of the usual one for rational maps, but here it is a little more restrictive since the branch set of a quasiregular map in $\S^n$, for $n\geq 3$, cannot have isolated points. As noted in the introduction, this means that the UQR power maps are not hyperbolic and neither are the UQR analogues of Chebyshev polynomials. However, there do exist hyperbolic UQR maps. The UQR map constructed in \cite{FlWu} is in fact conformal and expanding on a neighbourhood of its Julia set. It follows that the branch set is in the escaping set and hence its orbit cannot approach $J(f)$. Moreover, the conformal trap construction from \cite{Martin97,MP} give hyperbolic UQR maps. Note that all these examples have a totally disconnected Julia sets.

\subsection{Quasi-self-similarity}

A non-degenerate metric space $(X,d)$ is \emph{$c$-uniformly perfect} if there exists $c\geq 1$ such that for any $x\in X$ and any $r \in (0,\diam{X})$, the set $B(x,r)\setminus B(x,r/c)$ is nonempty. A metric space $(X,d)$ is \emph{$c$-uniformly disconnected} if there exists $c\geq 1$ such that for any $r \in (0,\diam{X})$ and any $x\in X$ there exists a set $E \subset X$ containing $x$ such that $\diam{E} \leq r$ and $\dist(E,X\setminus E)\geq r/c$.

Following Carrasco Piaggio \cite{Piaggio}, given a constant $r_0>1$ and a homeomorphism $\eta:[0,+\infty) \to [0,+\infty)$, we say that a metric space $(X,d)$ is $(\eta,r_0)$-\emph{quasi-self-similar} if for every $x\in X$ and $r\in (0,\diam{X})$ there exists an $\eta$-quasisymmetric $\phi: B(x,r) \to X$ such that 
\[ B(\phi(x),r_0) \subset \phi(B(x,r)).\]
Note that our definition of quasi-self-similarity is slightly weaker of that of Carrasco Piaggio as we make no assumption on the size of the ball $B(\phi(x),r_0)$. However, if $X$ is $c$-uniformly perfect, then it is easy to see that $\diam{B(\phi(x),r_0)}\geq r_0/c$. By Proposition \ref{prop:uqr} (vi), we can use this definition of quasi-self-similarity when discussing Julia sets of UQR maps.

\section{Proof of Theorem \ref{thm:hyperbolic-UD}}
\label{sec:proof1}

The aim of this section is prove Theorem \ref{thm:hyperbolic-UD}. Firstly we show that for uniformly perfect and totally disconnected sets, quasi-self-similarity implies uniform disconnectedness. Then we show that Julia sets of UQR maps are quasi-self-similar.

\begin{lemma}\label{lem:QSimpliesUD}
Suppose that $X$ is compact, uniformly perfect, quasi-self-similar and totally disconnected. Then $X$ is uniformly disconnected.
\end{lemma}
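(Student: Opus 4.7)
The plan combines two ingredients. First, compactness together with total disconnectedness forces, at any single fixed scale $\rho>0$, a finite clopen partition $\{W_1,\dots,W_m\}$ of $X$ with $\max_i \diam W_i\le\rho$; then $\beta:=\min_{i\ne j}\dist(W_i,W_j)>0$ because the $W_i$ are finitely many disjoint compacta. Second, the quasi-self-similar charts let us transfer this single-scale separation to every scale.

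Fix $x\in X$ and $r\in(0,\diam X)$, and apply quasi-self-similarity at scale $r$ to obtain $\phi\colon B(x,r)\to X$, $\eta$-quasisymmetric, with $B(\phi(x),r_0)\subset\phi(B(x,r))$. Choose the image-side partition scale $\rho<r_0$ (to be fixed momentarily) and let $V=W_{\phi(x)}$, so $V\subset B(\phi(x),\rho)\subset\phi(B(x,r))$ and $E:=\phi^{-1}(V)$ is a well-defined subset of $B(x,r)$ containing $x$. Using uniform perfectness of $X$ (with constant $c_p\ge 1$) we produce a reference point $w\in B(x,r)$ with $d(\phi(x),\phi(w))\in[r_0/c_p,r_0)$; the quasisymmetry of $\phi^{-1}$, whose distortion function $\eta'$ depends only on $\eta$, then yields $d(x,p)\le r\cdot\eta'(\rho c_p/r_0)$ for every $p\in E$. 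Since $\eta'(t)\to 0$ as $t\to 0^+$, a sufficiently small choice of $\rho$ guarantees $\diam E\le r/(2c_p)$.

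For the separation, split $X\setminus E$ into points outside $B(x,r)$, which are at distance at least $r-\diam E\ge r/2$ from $E$ by the triangle inequality, and points $q\in B(x,r)\setminus E$. For the latter, $\phi(q)\in X\setminus V$, so $d(\phi(p),\phi(q))\ge\beta$ for every $p\in E$. Uniform perfectness produces $\tilde z\in B(x,r)$ with $d(x,\tilde z)\ge r/c_p$, whence $d(p,\tilde z)\ge r/c_p-r/(2c_p)=r/(2c_p)$; applying the quasisymmetry of $\phi^{-1}$ with base $\phi(p)$ and reference $\phi(\tilde z)$, together with the trivial bound $d(\phi(p),\phi(\tilde z))\le\diam X$, gives $d(p,q)\ge r/(2c_p\,\eta'(\diam X/\beta))$. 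Combining the two sub-cases yields uniform disconnectedness, with constant depending only on $\eta$, $r_0$, $c_p$, $\beta$, and $\diam X$. The main obstacle is the calibration of $\rho$: small enough that the quasisymmetric pullback $E$ fits inside $B(x,r/2)$ (so that the ``far'' sub-case is dispatched cleanly), but positive so that the single-scale clopen partition is well defined.
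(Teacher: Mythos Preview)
Your argument is correct and follows the same overall strategy as the paper: fix a single clopen decomposition of $X$ on the image side, pull it back through the quasi-self-similar chart $\phi$, and use quasisymmetry of $\phi^{-1}$ to control both the diameter of $E=\phi^{-1}(V)$ and its separation from $X\setminus E$.

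The execution differs in two cosmetic but noteworthy ways. First, the paper manufactures its image-side pieces by invoking the topological characterization of Cantor sets (a homeomorphism $f:\mathcal{C}\to X$, then the cylinders $X_w$), whereas you obtain a finite clopen partition directly from compactness and total disconnectedness; your route is more elementary and avoids an extraneous theorem. Second, the paper packages the distortion estimates by quoting \cite[Proposition~10.8]{Heinonen} for the diameter bound and the Tyson inequality for the separation bound, while you run the quasisymmetry inequality for $\phi^{-1}$ by hand with explicitly chosen reference points $w$ and $\tilde z$ (produced via uniform perfectness). Both approaches yield a uniform-disconnectedness constant that depends on the particular set $X$ (through your $\beta$, or the paper's $d_0$), consistent with the remark following the lemma. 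One small technicality you glossed over: producing $\phi(w)\in B(\phi(x),r_0)\setminus B(\phi(x),r_0/c_p)$ via uniform perfectness tacitly assumes $r_0\le\diam X$, which is harmless since $r_0$ may always be shrunk.
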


\begin{proof}
Suppose that $X$ is $c$-uniformly perfect and $(r_0,\eta)$-quasi-self-similar. Since $X$ is non-degenerate, by rescaling its metric, we may assume that $\diam{X}=1$. Since $X$ is compact, perfect and totally disconnected, there exists a homeomorphism $f:\mathcal{C} \to X$ where $\mathcal{C}$ is the standard Cantor set. Recall that $\mathcal{C}$ is the attractor of the IFS $(\R,\{\phi_1,\phi_2\})$ where
\[ \phi_i(x) = x/3 +2(i-1)/3, \qquad i=1,2.\]

For each $k\in\N$ and $w = i_1\cdots i_k \in \{1,2\}^k$, we set $X_w = f(\phi_{i_1}\circ\cdots\circ\phi_{i_k}(\mathcal{C}))$. 
By the uniform continuity of $f$, there exists $k_0\in\N$ such that for any $w\in \{1,2\}^{k_0}$, 
\[ \diam{X_w} < \d_0:=  r_0 \min\left \{ (2c)^{-1}, (2c)^{-1}\theta^{-1}\left((4c)^{-3}\right) \right\},\]
where $\theta:[0,+\infty) \to [0,+\infty)$ is defined by $\theta(t) = (\eta^{-1}(t^{-1}))^{-1}$. Recall that the inverse of an $\eta$-quasisymmetric map is $\theta$-quasisymmetric \cite[Proposition 10.6]{Heinonen}. Define also
\[ d_0 := \min_{w \in \{1,2\}^{k_0}}\dist(X_w,X \setminus X_w).\]

Fix $x\in X$ and $r>0$. Then, there exists an $\eta$-quasisymmetric map $\phi:B(x,r) \to X$ such that
\[ B(\phi(x),r_0) \subset \phi(B(x,r)).\]
Let $w\in \{1,2\}^{k_0}$ such that $\phi(x)\in X_w$. Then, by the choice of $\d_0$ we have that $X_w \subset B(\phi(x),(2c)^{-1}r_0)$. Set $E = \phi^{-1}(X_w)$. We show that $\diam{E}$ is less than $r$, while its distance from $X\setminus E$ is at least comparable to $r$.

Firstly, by the uniform perfectness of $X$, we know that 
\[ \diam{\phi(B(x,r))} \geq \diam{B(\phi(x),r_0)} \geq c^{-1}r_0.\]
Therefore, by Proposition 10.8 in \cite{Heinonen} and the choice of $\d_0$,
\begin{equation}\label{eq:diam}
\diam{E} \leq \theta\left( 2\frac{\diam{X_w}}{\diam{\phi(B(x,r))}} \right) \diam{B(x,r)} \leq 2\theta\left( 2c\d_0r_0^{-1}\right)r < (2c)^{-3}r.
\end{equation}
By the uniform perfectness of $X$, there exist a point $y_1 \in B(x,r)\setminus B(x,r/c)$ and a point $y_2 \in B(x,2^{-3}c^{-2}r)\setminus B(x,(2c)^{-3}r)$. Therefore,
\[ \diam(B(x,r) \setminus E) \geq |y_1-y_2| \geq  r(c^{-1}-\tfrac18 c^{-3}).\]

We now estimate $\dist(E,X\setminus E)$. By the choice of $\d_0$, we have that $X_w\subset B(\phi(x),(2c)^{-1}r_0)$ and, by uniform perfectness of $X$, $\diam{B(\phi(x),r_0)} \geq r_0/c$. Hence,
\[ \diam(\phi(B(x,r)\setminus E)) \geq \diam(B(\phi(x),r_0)\setminus X_w) \geq (2c)^{-1}r_0.\]
Now, by \cite[p. 532]{Tyson98}, setting $\psi(t) = (\theta(t^{-1}))^{-1}$, we have
\begin{align*} 
\dist(E,X\setminus E) &= \dist(E, B(x,r)\setminus E)\\
&\geq \frac12\psi\left(\frac{\dist(X_w,\phi(B(x,r)\setminus E))}{\diam(\phi(B(x,r)\setminus E))}  \right)\diam(B(x,r)\setminus E)\\
&\geq \frac12\psi\left(\frac{d_0}{\diam{X}}  \right)c^{-1}r\\
&\geq \frac12\psi\left(d_0  \right)c^{-1}r. \qedhere
\end{align*}
\end{proof}

\begin{remark}
For any $C>1$ there exists a $4$-uniformly perfect and $(\eta,1)$-quasi-self-similar set with $\eta(t) =t$, that is not $C$-uniformly disconnected; therefore the uniform disconnectedness constant in Lemma \ref{lem:QSimpliesUD} does not depend only on $r_0$, $\eta$ and $c$, but also on the set. To see this, fix $C>1$ and $\e \in (0, (2C+1)^{-1})$. Let $X$ be the Cantor set which is the attractor of the IFS $(\R,\{\phi_1,\phi_2\})$ with
\[ \phi_i(x) = (1-\e)x/2 + (i-1)(1+\e)/2, \qquad i=1,2.\]
Since $\e< 1/2$, it is easy to see that $X$ is $4$-uniformly perfect. Moreover, since $X$ is self-similar, it is also $(\eta,1)$-quasi-self-similar with $\eta(t) = t$. Now, if $x = 0$, and $r= (1-\e)/2$, then for any $E \subset X\cap B(x,r)$ we have $\dist(E, X\setminus E) \leq \e < C^{-1}r$. Hence, $X$ is not $C$-uniformly disconnected.
\end{remark}

For the rest of this section we will use the chordal metric $\sigma$ on $\S^n$. If $E,F$ are closed sets in $\R^n \cup \{\infty \}$, then $\sigma (E,F)$ denotes the chordal distance between them.
Moreover, given $f: \S^n \to \S^n$, denote by $L_f(x,r)$ the quantity
\[ L_f(x,r) = \max_{\sigma(y,x) = r} \sigma( f(y) , f(x)) .\]

\begin{lemma}
\label{lem:1}
Let $f: \S^n \to \S^n$ be a hyperbolic UQR map. There exists $r_1 > 0$ such that if $x\in J(f)$, then $f$ is injective on $B(x,r_1)$.
\end{lemma}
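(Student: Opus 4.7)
The plan is a compactness argument built on the observation that hyperbolicity puts $J(f)$ at positive distance from the branch set. Since $\mathcal{B}(f)\subset\mathcal{P}(f)$ (it is the $m=0$ term in the orbit defining $\mathcal{P}(f)$), the hypothesis $J(f)\cap\mathcal{P}(f)=\emptyset$ implies $J(f)\cap\mathcal{B}(f)=\emptyset$. In particular, every $x\in J(f)$ lies in the complement of $\mathcal{B}(f)$, so by definition of the branch set there exists some $r(x)>0$ such that $f$ restricted to $B(x,r(x))$ is a homeomorphism onto its image, and in particular is injective.

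Next, I would pass from a pointwise to a uniform radius using compactness of $J(f)$. The family $\{B(x,r(x)/2):x\in J(f)\}$ is an open cover of the compact set $J(f)$, hence admits a finite subcover $\{B(x_i,r(x_i)/2)\}_{i=1}^N$. I would then set
\[
r_1 := \min_{1\le i\le N} \tfrac{1}{2}r(x_i) > 0.
\]
Given any $x\in J(f)$, choose $i$ with $\sigma(x,x_i)<r(x_i)/2$. A short triangle inequality calculation then shows $B(x,r_1)\subset B(x_i,r(x_i))$, and since $f$ is injective on the larger ball by construction, it is injective on $B(x,r_1)$.

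I do not expect a real obstacle here: the only subtle point is justifying that $\mathcal{B}(f)$ is indeed a subset of $\mathcal{P}(f)$ (which is immediate from the definition of $\mathcal{P}(f)$), and ensuring that the notion of ``local homeomorphism'' used to define $\mathcal{B}(f)$ in Rickman's sense actually gives an injective ball, which is standard. Note that nothing beyond the disjointness $J(f)\cap\mathcal{B}(f)=\emptyset$ plus compactness of $J(f)$ is needed; the stronger hypothesis that also the whole post-branch set is separated from $J(f)$ will presumably be used elsewhere in the section (for instance to control the iterates $f^m$ and the derivatives $L_{f^m}(x,r)$ in the expansion/quasi-self-similarity argument), but it is not required for this particular lemma.
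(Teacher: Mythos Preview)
Your proposal is correct and follows essentially the same approach as the paper: both arguments exploit that hyperbolicity forces $J(f)\cap\mathcal{B}(f)=\emptyset$, so each point of $J(f)$ has a positive injectivity radius, and then invoke compactness of $J(f)$ to upgrade this to a uniform radius. The only cosmetic difference is that you use a finite-subcover/triangle-inequality argument, whereas the paper phrases it as a proof by contradiction via sequential compactness (if no uniform $r_1$ existed, a sequence $x_k\in J(f)$ with injectivity radii tending to $0$ would subconverge to a point of $J(f)\cap\mathcal{B}(f)$); these are interchangeable implementations of the same idea.
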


\begin{proof}
For each $x\in J(f)$, let $r_x$ denote the supremum of radii $r$ so that $f$ is injective on $B(x,r)$. Since $f$ is hyperbolic, $r_x > 0$ for each $x\in J(f)$. The $r_1$ we will obtain is the Lebesgue covering number of the cover $\{ B(x,r_x) : x\in J(f) \}$ of $J(f)$.

Now suppose the result was false. Then there would exist a sequence $x_n \in J(f)$ with $r_{x_n} \to 0$. Passing to a subsequence if necessary, and recalling that $J(f)$ is compact, we may assume by relabelling that $x_n \to x_0$. Since $J(f)$ is closed, $x_0 \in J(f)$. Then there is no neighbourhood of $x_0$ on which $f$ is injective. To see this, if $\epsilon >0$, we can find $N$ large enough so that $B(x_N , r_{x_N}) \subset B(x_0,\epsilon /2)$.

This means that $x_0 \in \mathcal{B}(f)$. However, since $f$ is hyperbolic, we arrive at a contradiction.
\end{proof}

\begin{lemma}
\label{lem:1a}
Let $f: \S^n \to \S^n$ be a non-injective hyperbolic UQR map and let $J(f)$ be a Cantor set. There exists $\epsilon_0 >0$ so that if $0<\epsilon<\epsilon_0$ and $U$ is an $\epsilon$-neighbourhood of $J(f)$ then there exists $N\in N$ such that $\overline{f^{-N}(U)} \subset U$.
\end{lemma}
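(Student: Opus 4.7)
The plan is to translate the backward-iteration conclusion into a forward-iteration statement on $\S^n\setminus U$ and then exploit the attracting behaviour of hyperbolic Fatou dynamics. First, since $f$ is quasiregular each iterate $f^N$ is continuous and open, and a routine argument (using that $f^N$ takes open sets to open sets) gives the identity $\overline{f^{-N}(U)} = f^{-N}(\overline{U})$. Hence $\overline{f^{-N}(U)}\subset U$ is equivalent to
\[ f^N(K)\cap \overline{U} = \emptyset, \qquad K := \S^n \setminus U; \]
the compact set $K$ is contained in $F(f)$ because $J(f)\subset U$.

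Next, I would use hyperbolicity to fix $\epsilon_0$: the sets $J(f)$ and $\mathcal{P}(f)$ are disjoint and compact, so $\delta := \sigma(J(f),\mathcal{P}(f)) > 0$, and taking $\epsilon_0 := \delta/3$ ensures that for every $\epsilon \in (0,\epsilon_0)$ the set $\overline{U}$ is disjoint from $B(\mathcal{P}(f),\delta/3)$. The lemma thereby reduces to the following \emph{attraction statement}: for every compact $K'\subset F(f)$, one has $f^N(K')\subset B(\mathcal{P}(f),\delta/3)$ for all sufficiently large $N$. Applying this to $K'=K$ produces the required $N$.

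The attraction statement is the heart of the proof and also the main obstacle. Its content is that for a hyperbolic map, every forward orbit in $F(f)$ is attracted to the attracting cycles of $f$, and these cycles lie in $\mathcal{P}(f)$ because they attract forward orbits of points in $\mathcal{B}(f)\subset\mathcal{P}(f)$. I would argue by contradiction: if the statement failed, one could extract $x_{N_k}\in K'$ with $x_{N_k}\to x^*\in K'\subset F(f)$ and $\sigma(f^{N_k}(x_{N_k}),\mathcal{P}(f))\ge \delta/3$. Since $x^*\in F(f)$, the family $\{f^n\}$ is normal on a neighbourhood $W$ of $x^*$, and after passing to a sub-subsequence $f^{N_k}\to g$ locally uniformly on $W$; uniform convergence then forces $g(x^*)=\lim f^{N_k}(x_{N_k})$, contradicting $g(x^*)\in \omega(x^*)\subset \mathcal{P}(f)$.

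The delicate point is the inclusion $\omega(x^*)\subset \mathcal{P}(f)$ for $x^*\in F(f)$. In the rational case it is classical, following from the Sullivan classification of Fatou components together with the fact that every attracting cycle of a hyperbolic map attracts a point of the branch set. In the UQR setting one must argue similarly by combining the normality of $\{f^n\}$ on $F(f)$, the classification of subsequential quasiregular limits as constants or open mappings, and the hyperbolicity assumption that the branch orbit stays at positive distance from $J(f)$, so that accumulation of Fatou orbits on $J(f)$ is ruled out.
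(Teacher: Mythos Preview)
Your approach is sound and ultimately rests on the same ingredients as the paper's proof, though the packaging differs. Both arguments reduce to a compactness/subsequence contradiction and both must invoke the classification of fixed Fatou components for UQR maps. The paper cites this classification explicitly (from Hinkkanen--Martin--Mayer), observes that the single Fatou component must be a (super-)attracting or parabolic basin since $f$ is non-injective, and then chooses $\epsilon_0$ relative to the finite set $S$ of Fatou fixed points together with $\mathcal{E}(f)$; the parabolic case is then eliminated because the parabolic fixed point would lie in $J(f)\cap\mathcal{P}(f)$. Your version instead chooses $\epsilon_0$ relative to $\mathcal{P}(f)$ and bundles everything into the single ``attraction statement'' $f^N(K')\to\mathcal{P}(f)$, which is an elegant reformulation but whose proof is exactly the case analysis the paper carries out.

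The only point that needs tightening is your last paragraph: the inclusion $\omega(x^*)\subset\mathcal{P}(f)$ is not something you can get from normality and the constant/open dichotomy for limits alone. You really do need the UQR Fatou classification to know that the unique component is an attracting or parabolic basin (rotation domains are excluded because hyperbolicity forces $\mathcal{B}(f)\subset F(f)$, so injectivity on $F(f)$ would make $f$ globally injective). Once you know this, the inclusion follows: in either case $\omega(x^*)$ is a single fixed point $p$, and since $\mathcal{B}(f)\neq\emptyset$ lies in the same basin, $p\in\mathcal{P}(f)$; in the parabolic case $p\in J(f)$ as well, contradicting hyperbolicity. So your sketch is correct, but you should cite the classification rather than leave it as an allusion. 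Your preliminary reduction via $\overline{f^{-N}(U)}=f^{-N}(\overline{U})$, using openness of $f^N$, is a clean step the paper leaves implicit.
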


\begin{proof}
First, by the classification of stable Fatou components given in \cite[Proposition 4.9]{HMM04}, each stable Fatou component is either a (super-)attracting basin, a parabolic basin, or a rotation domain. Since $J(f)$ is a Cantor set, there is just one Fatou component. Further, since $f$ is non-injective, it follows that $F(f)$ is either a (super-)attracting basin containing one fixed point, or a parabolic basin containing no fixed points.

Next, let $S \subset F(f)$ denote the union of the set of fixed points of $f$ in the Fatou set together with the exceptional set $\mathcal{E}(f)$. By the previous paragraph, this is a finite set. Let $\epsilon _0 < \sigma ( J(f) , S)$. Now given $\epsilon \in (0,\epsilon_0)$, let $U$ be the $\epsilon$-neighbourhood of $J(f)$. If the lemma is not true, then for each $k\in \N$, there exist $w_k\in \S^n \setminus U$ and $z_k \in U$ with $f^k(w_k) = z_k$.

Since $\S^n$ is compact, we may pass to subsequences $w_{k_j}$ and $z_{k_j}$ which converge to $w_0 \in \S^n \setminus U$ and $z_0\in \overline{U}$ respectively. By construction, $w_0 \in F(f)$ and so there exists a neighbourhood of $w_0$, say $V$, on which $(f^{k_j}|_U)$ converges uniformly to a fixed point of $f$ in the Fatou set in the (super-)attracting case, or to a fixed point of $f$ in the Julia set in the parabolic case. This fixed point is necessarily $z_0$.

In the case where $z_0 \in F(f)$, we obtain a contradiction because $\overline{U}$ cannot contain any points of $S$. In the case where $z_0 \in J(f)$, since the branch set is contained in $F(f)$ and the subsequence $f^{k_j}$ converges uniformly on compact subsets of $F(f)$ to $z_0$, we obtain a contradiction to $f$ being hyperbolic.
This completes the proof.
\end{proof}

\begin{theorem}
\label{thm:1}
If $f: \S^n \to \S^n$ is a hyperbolic UQR map, then $J(f)$ is quasi-self-similar.
\end{theorem}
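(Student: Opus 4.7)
The plan is to produce, for each $x\in J(f)$ and each scale $r\in(0,\diam J(f))$, a forward iterate $f^k$ whose restriction to $B(x,r)\cap J(f)$ is an $\eta$-quasisymmetric map into $J(f)$ whose image contains a piece of $J(f)$ of a uniform size around $f^k(x)$; the distortion function $\eta$ and the lower radius $r_0$ will depend only on $f$ (through $K$, $n$, and the hyperbolicity data), not on $x$ or $r$. This is the ``conformal elevator'' principle familiar from hyperbolic complex dynamics, transported to the UQR setting.

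The first ingredient is local inverse branches. Lemma \ref{lem:1} supplies a common injectivity radius $r_1>0$ for $f$ along $J(f)$, and forward invariance of $J(f)$ means every iterate $f^i(x)$ with $x\in J(f)$ sits at the centre of an injectivity ball of $f$. Combined with hyperbolicity (so $\sigma(\mathcal{P}(f),J(f))>0$), this lets me construct, for some $\delta_0\in(0,r_1)$ independent of $x$ and $k$, a single-valued branch $g_{x,k}\colon B(f^k(x),\delta_0)\to\S^n$ of $f^{-k}$ with $g_{x,k}(f^k(x))=x$. Being a composition of inverses of injective $K$-quasiconformal restrictions of $f$, the branch $g_{x,k}$ is $K$-quasiconformal, hence $\eta_{K,n}$-quasisymmetric on $B(f^k(x),\delta_0/2)$.

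The key analytic step is uniform contraction of these branches:
\[ \sup_{x\in J(f)}\diam g_{x,k}(B(f^k(x),\delta_0/2))\xrightarrow[k\to\infty]{}0. \]
I would derive this from Lemma \ref{lem:1a}. Choosing $\epsilon\in(0,\epsilon_0)$ and the associated iterate $N$ with $\overline{f^{-N}(U_\epsilon)}\subset U_\epsilon$, every branch of $f^{-N}$ maps the compact neighbourhood $\overline{U_\epsilon}$ strictly inside itself. Iterating $N$-step branches and using the uniform quasisymmetry bound on each step extracts a geometric shrinking rate $\lambda\in(0,1)$ uniform in $x\in J(f)$; a normal-family argument rules out the alternative, since a nontrivial limit of branches of $f^{-jN}$ would contradict the repelling nature of $J(f)$ for a hyperbolic UQR map.

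Given uniform contraction, I fix $x\in J(f)$ and $r\in(0,\diam J(f))$ and let $k=k(x,r)$ be the smallest nonnegative integer with $g_{x,k}(B(f^k(x),\delta_0/2))\subset B(x,r)$; when $k=0$ the identity works with $r_0=\delta_0/2$. Otherwise the minimality at step $k-1$ provides the matching lower bound needed, together with the $\eta$-quasisymmetry of branches, to verify both containments $g_{x,k}(B(f^k(x),\delta_0/2))\subset B(x,r)\subset g_{x,k}(B(f^k(x),\delta_0))$. Then $\phi:=f^k|_{B(x,r)\cap J(f)}$ is the restriction of the $\eta$-quasisymmetric inverse $g_{x,k}^{-1}$, and its image contains $B(f^k(x),\delta_0/2)\cap J(f)$, verifying quasi-self-similarity with $r_0=\delta_0/2$. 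The main obstacle is the uniform contraction step: extracting a quantitative, $x$-independent geometric rate from the purely topological invariance of Lemma \ref{lem:1a}. Once that is in hand, the remainder is routine bookkeeping of the nested containments and invocation of quasisymmetry of branches.
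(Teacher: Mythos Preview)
Your approach is a valid ``conformal elevator'' and reaches the same conclusion, but it is organised dually to the paper's argument: you work with \emph{inverse branches} and a uniform contraction estimate, whereas the paper works with \emph{forward iterates} and a blow-up estimate. Concretely, the paper passes to $g=f^N$ (with $N$ from Lemma~\ref{lem:1a}), takes the concentric pair $B=B(x,r)\subset B'=B(x,2r)$, and chooses the \emph{minimal} $M$ with $L_{g^M}(x,2r)\ge\delta$, where $\delta=\sigma(\partial g^{-1}(U),J(g))$. The existence of such $M$ comes for free from Proposition~\ref{prop:uqr}(iv) (forward orbits of open sets meeting $J(f)$ cover $\S^n\setminus\mathcal{E}(f)$), and minimality gives $g^{M-1}(B')\subset g^{-1}(U)$, whence $g^M(B')\subset U$; injectivity of $g^M$ on $B'$ and the egg-yolk principle then yield the required $\eta$-quasisymmetry on $B$ and the lower radius $r_0=\delta/\eta(2)$.

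The practical difference is that the paper never needs to extract a geometric shrinking rate for inverse branches: the step you flag as ``the main obstacle'' simply does not arise, because the covering property supplies $M$ directly. Your sketch for that step (iterate $N$-step branches and appeal to a normal-family/repelling argument) is plausible but incomplete as written; in particular, ``the repelling nature of $J(f)$'' is not something established in the paper, so you would have to derive it from Lemma~\ref{lem:1a} plus compactness. One small slip: you justify that $g_{x,k}$ is $K$-quasiconformal by writing it as a composition of $k$ inverses of $K$-quasiconformal restrictions of $f$, which a priori only gives $K^k$; the correct reason is that $f^k$ itself is $K$-quasiregular (UQR), hence its local inverse is $K$-quasiconformal. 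With that fix and a genuine proof of uniform contraction, your route goes through; the paper's forward-iterate route is simply shorter because it trades your contraction lemma for the already-available Proposition~\ref{prop:uqr}(iv).
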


\begin{proof}
Recalling $r_1$ from Lemma \ref{lem:1} and $\epsilon _0$ from Lemma \ref{lem:1a}, let 
\[ r_2 < \min \{r_1 ,\epsilon _0, \sigma(J(f) , \mathcal{P}(f)) \}.\] 
Then let $U$ be an $r_2$-neighbourhood of $J(f)$. Note that $U$ cannot be all of $\S^n$ since $\mathcal{B}(f)$ is non-empty.
By construction, $\partial U \subset F(f)$.
By Lemma \ref{lem:1a}, we can find $N\in \N$ such that $\overline{f^{-N}(U)} \subset U$. 

Set $g=f^N$. Then $J(g) = J(f)$ by Proposition \ref{prop:uqr}(ii) and we have $\overline{g^{-1}(U)} \subset U$. In particular, $\partial g^{-1}(U)$ is contained in $U$, is compact and is in $F(f)$. Hence $\sigma( \partial g^{-1}(U) , J(g) ) := \delta > 0$. Moreover, $g^{-1}(U) \cap \mathcal{B}(g) = \emptyset$ since $U \cap \mathcal{P}(f) = \emptyset$.  The point is that if $x\in J(g)$ and $0<t<\delta$, then $g$ is quasiconformal on $B(x,t)$ and $g(B(x,t)) \subset U$. 

Now suppose $r<\delta /2 $ and $x\in J(g)$. Let $B = B(x,r)$ and let $B' = B(x,2r) \subset U$.
Since the forward orbit of $B'$ covers everything except the exceptional set, we can find $M\in \N$ minimal so that 
\begin{equation}
\label{eq:thm1}
L_{g^M}(x,2r) \geq \delta. 
\end{equation}
Then necessarily we have $L_{g^M}(x,2r) \leq r_2$ since it will take at least one more iterate of $g$ for some points in the image of $B'$ to leave $U$. Since $g^i(B') \subset U$ for $i=1,\ldots, M$, it follows that $g^M$ is injective on $B'$ and, in particular, it is quasiconformal on $B'$. The egg-yolk principle \cite[Theorem 11.14]{Heinonen} implies that $g^M$ is $\eta$-quasisymmetric on $B$.
It follows from this and \eqref{eq:thm1} that $g^M(B)$ contains the ball
\[ B\left ( g^M(x) ,\frac{ \delta}{\eta(2)} \right ).\]
We therefore have obtained the condition for quasi-self-similarity of $J(f)$ with $r_0 = \delta/\eta(2)$ and $\phi = g^M|_{J(g)} = f^{NM}|_{J(f)}$. If $r\geq \delta/\eta(2)$, then we may just take $\phi$ to be the identity map. Combining these cases, we conclude that $J(f)$ is quasi-self-similar.
\end{proof}

\section{Proof of Theorem \ref{thm:UD-hyperbolic}}
\label{sec:proof2}

Recall that David and Semmes proved that a set $X\subset\R^n$ is quasisymmetrically homeomorphic to $\mathcal{C}$ if and only if it is compact, doubling, uniformly disconnected and uniformly perfect. Later, MacManus improved that result for sets in $\R^2$ by showing that a set $E \subset\R^2$ is quasisymmetrically homeomorphic to $\mathcal{C}$ if and only if it is the image of $\mathcal{C}$ under a quasiconformal homeomorphism of $\R^2$. (Here and in what follows, for each $n\geq 2$ we identify $\mathcal{C}$ with $\mathcal{C}\times \{(0,\dots,0)\} \subset\R^n$.) MacManus' result is false in $\R^3$ due to the existence of self-similar wild Cantor sets in $\R^3$ \cite[pp. 70--75]{Daverman}, but by increasing the dimension by $1$, MacManus' result generalizes to dimensions $n\geq 3$. See \cite{Semmes1} for related results.

\begin{theorem}[{\cite{MM2,Vext}}]\label{thm:MM}
Given $c,C>1$ and integer $n\geq 2$, there exists $K\geq 1$ depending on $c,C,n$ such that
if a set $E\subset \R^n$ is compact, $c$-uniformly perfect, and $C$-uniformly disconnected, then there exists a $K$-quasiconformal mapping $F:\R^N \to \R^N$ with $F(\mathcal{C}) = E$, where $N=2$ if $n=2$, and $N=n+1$ if $n\geq 3$.
\end{theorem}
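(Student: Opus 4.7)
The plan is to construct the quasiconformal map $F$ by building a dyadic hierarchy of pieces in both $\mathcal{C}$ and $E$, surrounding the $E$-pieces by a tame nested tree of topological balls in $\R^N$, and quasiconformally interpolating between the two trees on the complementary shells.

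First, using uniform disconnectedness and uniform perfectness of $E$, I would construct a nested system of compact subsets $\{E_w : w \in \{1,2\}^k,\, k\geq 0\}$ with $E_\emptyset = E$, $E_w = E_{w1} \sqcup E_{w2}$, and constants $\lambda \in (0,1)$ and $c_0 > 0$ so that $\diam(E_w) \leq \lambda^{|w|}$ and $\dist(E_w, E_{w'}) \geq c_0 \lambda^{|w|}$ whenever $w, w'$ are distinct words of equal length. Uniform disconnectedness supplies the separations needed to split each $E_w$ into two pieces; uniform perfectness guarantees the splits are non-trivial and controls relative sizes from below. This is essentially the David--Semmes hierarchy and it matches the natural dyadic hierarchy $\{C_w\}$ of cylinder subsets of $\mathcal{C}$ itself, giving a canonical bijection between the two trees.

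The core step, which is also the main obstacle, is to find for each $w$ a topological ball $U_w \subset \R^N$ containing $E_w$ so that the family $\{U_w\}$ forms a tamely nested tree: $U_{w1}$ and $U_{w2}$ are disjoint and compactly contained in $U_w$, with $\diam(U_w) \asymp \lambda^{|w|}$ and the distance from $\overline{U_{wi}}$ to $\partial U_w$ also comparable to $\lambda^{|w|}$, and so that each shell $U_w \setminus (U_{w1} \cup U_{w2})$ is quasiconformally equivalent to a fixed model shell (a Euclidean ball minus two disjoint smaller balls) with dilatation bounded in terms of $c, C, n$ only. For $n = 2$ one can take the $U_w$ to be Jordan-disk neighborhoods inside $\R^2$ using planar approximation and Moise's theorem, since there are no wild Cantor sets in $\R^2$; this is MacManus's case. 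For $n \geq 3$, wild embeddings such as Antoine's necklace obstruct any such tree inside $\R^n$, but adding one coordinate resolves the issue: every compact $E\subset \R^n$ is tamely embedded in $\R^{n+1}$, and one can take $U_w$ to be thin tubular neighborhoods in $\R^{n+1}$ of piecewise linear spanning trees that travel out of the slice $\R^n\times\{0\}$ in the extra coordinate, arranged so that the tree structure is realized by round-like topological balls with uniform shell geometry.

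Once the tree of $U_w$ is in hand, I would set up a parallel tree of round balls $V_w \subset \R^N$ around the cylinders $C_w$ of $\mathcal{C}$ and define $F: \R^N \to \R^N$ by induction on scale. On each model shell $V_w \setminus (V_{w1} \cup V_{w2})$, choose a $K$-quasiconformal homeomorphism onto $U_w \setminus (U_{w1} \cup U_{w2})$ matching the previously defined boundary values, with $K$ depending only on $c, C, n$ thanks to the uniform shell control from the previous step. Piecing these together gives a $K$-quasiconformal map on $\R^N \setminus \mathcal{C}$ that extends continuously across $\mathcal{C}$ to a homeomorphism sending $\mathcal{C}$ onto $E$. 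Since $\mathcal{C}$ has Hausdorff dimension $\log 2/\log 3 < 1 \leq N-1$, it is quasiconformally removable, so the extension is $K$-quasiconformal on all of $\R^N$, as required.
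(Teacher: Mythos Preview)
The paper does not prove this theorem; it is stated as a known result, attributed to MacManus \cite{MM2} for the planar case $N=n=2$ and to \cite{Vext} for $n\geq 3$, $N=n+1$. There is therefore no proof in the paper to compare your proposal against.

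That said, your outline is in the spirit of those cited works: build a dyadic tree of pieces of $E$ from uniform disconnectedness and uniform perfectness, surround the pieces by a tamely nested system of topological balls in $\R^N$, interpolate quasiconformally on the complementary shells with dilatation bounded only in terms of $c,C,n$, and finish with removability of $\mathcal{C}$. Two comments. First, the reference to ``Moise's theorem'' in the planar step is misplaced --- Moise's results concern triangulations of $3$-manifolds; MacManus's argument uses purely planar machinery. Second, the step you correctly flag as the ``main obstacle'' --- producing in $\R^{n+1}$ a nested tree of topological balls around the $E_w$ whose shells are quasiconformally equivalent to a fixed model with \emph{uniform} dilatation bounds --- is genuinely the heart of the matter, and your sketch (tubular neighborhoods of spanning trees pushed into the extra coordinate) gestures at the right picture but does not supply the uniform geometric control; that is precisely where \cite{Vext} does the real work. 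So your proposal is a faithful road map rather than a proof.
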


For the proof of Theorem \ref{thm:UD-hyperbolic}, we require the following well-known lemma which says that the standard Cantor set is the Julia set of a hyperbolic UQR map. We include a proof for completeness; see also \cite{MP} and \cite{IM}. The main novelty is that we check the constructed map is hyperbolic.

\begin{lemma}\label{lem:Cantorset}
Let $n\geq 2$.
There exists a hyperbolic UQR map $G:\S^n \to \S^n$ whose Julia set is the standard Cantor set $\mathcal{C}$.
\end{lemma}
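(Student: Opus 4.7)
The plan is to implement the conformal trap construction of Martin and Peltonen \cite{MP} (see also \cite{Martin97, IM}), tailored so that the resulting Julia set is the standard Cantor set $\mathcal{C}$, and then to verify the hyperbolicity condition, which is the main novelty.

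I would start with the two contracting similarities $\phi_1(x)=x/3$ and $\phi_2(x)=x/3+(2/3)e_1$ of $\R^n$, whose attractor (after identifying $\mathcal{C}$ with $\mathcal{C}\times\{0\}^{n-1}$) is $\mathcal{C}$. A direct check shows that the open ball $B:=B((1/3,0,\dots,0),1/2)$ satisfies $\phi_1(\overline{B})\cup\phi_2(\overline{B})\subset B$ and $\phi_1(\overline{B})\cap\phi_2(\overline{B})=\emptyset$, providing the required trap. Next, I would define $G:\S^n\to\S^n$ piecewise: on each $\phi_i(\overline{B})$ set $G=\phi_i^{-1}$, an expanding similarity onto $\overline{B}$; on a small closed ball $\overline{W}$ around $\infty$ disjoint from $\overline{B}$, let $G$ be a contracting similarity fixing $\infty$ and mapping $\overline{W}$ strictly into itself; and on the remaining region $A:=\S^n\setminus(\phi_1(B)\cup\phi_2(B)\cup W)$, extend by a quasiregular map matching the prescribed boundary data and sending $A$ into the basin of $\infty$. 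For $n=2$ this extension may be taken quasiconformal, while for $n\geq 3$ it must be a Zorich-type branched quasiregular map as in \cite{MP}.

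The verification would then proceed in three steps. For the UQR property, the \emph{conformal trap} principle applies: every forward $G$-orbit enters $A$ at most once, since $\bigcup_i\phi_i(\overline{B})$ and $\overline{W}$ are each (eventually) forward-invariant and $G$ is a similarity on each, so once a point leaves $\bigcup_i \phi_i(\overline{B})$ it lands in $\overline{W}$ and never returns; hence the maximal dilatation of every iterate $G^m$ is bounded by that of $G$. For $J(G)=\mathcal{C}$, a standard IFS argument yields $\bigcap_{m\geq 0} G^{-m}(\overline{B})=\mathcal{C}$, $G$ is uniformly expanding on $\mathcal{C}$ so $\mathcal{C}\subset J(G)$, and conversely any point outside $\mathcal{C}$ eventually leaves $\bigcup_i \phi_i(\overline{B})$, lands in $\overline{W}$, and is attracted to $\infty$, hence lies in $F(G)$. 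For hyperbolicity, $G$ is a similarity on each $\phi_i(\overline{B})$ and on $W$, so the branch set $\mathcal{B}(G)$ is contained in $A$; by construction $G(A)$ lies in the basin of $\infty$, so $G^m(\mathcal{B}(G))$ for $m\geq 1$ lies in $\overline{W}$ and converges to $\infty$, giving $\mathcal{P}(G)\subset\overline{W}\cup\{\infty\}\subset F(G)$, which is disjoint from $J(G)=\mathcal{C}$.

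The main technical obstacle, already resolved in the cited references, is the quasiregular interpolation step when $n\geq 3$: the region $A$ has three boundary spheres and no quasiconformal extension respecting the boundary data is topologically possible, which is what forces the use of a branched Zorich-type map. The new ingredient here is simply the observation that the branch set is trapped inside the interpolation region $A$ and then funnelled into the attracting basin of $\infty$, which makes hyperbolicity transparent.
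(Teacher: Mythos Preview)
Your outline is a legitimate conformal-trap construction, but it is organized differently from the paper's. The paper begins with the explicit degree-two winding map $g(r,\theta,x_3,\dots,x_n)=(r,2\theta,x_3,\dots,x_n)$, replaces it by a translation on three small balls $B(p_i,b)$, and then post-composes with a M\"obius inversion $\Phi$ in $\partial B(p_0,b)$ to create the trap; the branch set is the winding axis $(\{(0,0)\}\times\R^{n-2})\cup\{\infty\}$, which is carried by $\Phi\circ\tilde g$ into the trap ball $B(p_0,b)$, and that is how hyperbolicity is read off. You instead build $G$ directly from the IFS, setting $G=\phi_i^{-1}$ on each $\phi_i(\overline B)$ and placing the branching in the interpolation shell $A$. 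Your route has the pleasant feature that $J(G)=\mathcal{C}$ by the standard IFS argument, whereas the paper's proof only produces ``a tame Cantor set contained in $B(p_1,b)\cup B(p_2,b)$'' and leaves the identification with $\mathcal{C}$ implicit; the paper's route, on the other hand, makes the branched covering completely explicit rather than deferring to an interpolation lemma.

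Two points in your write-up need correction. First, the extension on $A$ is \emph{never} quasiconformal, even when $n=2$: the boundary data sends both $\partial\phi_1(B)$ and $\partial\phi_2(B)$ onto $\partial B$, so $G|_{\overline A}$ is already non-injective on $\partial A$ (equivalently, $\chi(A)=-1$ while the target annulus has $\chi=0$, so Riemann--Hurwitz forces a branch point). Second, with a genuinely small $W$ your claim that an orbit visits $A$ at most once fails: since $G(\partial\phi_i(B))=\partial B\subset A$, one has $G(\overline A)\cap A\neq\emptyset$. The clean fix is to take $W=\S^n\setminus\overline B$, so that $A$ is exactly the pair of pants $\overline B\setminus(\phi_1(B)\cup\phi_2(B))$ and $G(A)\subset\overline W$; this is the analogue of the paper's inversion step, which in one stroke throws everything outside the repelling balls into the conformal trap.
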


\begin{proof}
Let $p_0=(-1,0,0,\dots,0)$, $p_1=(0,1,0,\dots,0)$ and $p_2 = (0,-1,0,\dots,0)$. Let $g:\R^n \to \R^n$ with  
\[ g(r,\theta,x_3,\dots,x_n) = (r,2\theta,x_3,\dots,x_n)\]
where the first two coordinates of $\R^n$ are in polar coordinates. It is easy to see that $g$ is a bounded length distortion map with branch set the hyperplane $\{(0,0)\}\times\R^{n-2}$ and that
\[ g^{-1}(p_0) = \{p_1,p_2\}, \quad g(p_0) = (1,0,\dots,0).\]

Let $r_0>0$ so that $g^{-1}(B(p_0,r_0))$ has exactly two components, one containing $p_1$ and another containing $p_2$. Choose also positive constants $a,b$ so that $b<a/2$ and
\begin{enumerate}
\item $B(p_i,a) \subset g^{-1}(B(p_0,r_0))$ for $i=1,2$;
\item $B(p_0,b) \subset g(B(p_i,a))$ for $i=1,2$;
\item $g(B(p_0,b)) \subset B(g(p_0),a) \subset g(B(p_0,r_0))$.
\end{enumerate} 

Now we define $\tilde{g}: \R^n \to \R^n $ with the following rules
\begin{enumerate}
\item $\tilde{g}|_{\R^n \setminus \bigcup_{i=0,1,2}B(p_i,a)} = g|_{\R^n \setminus \bigcup_{i=0,1,2}B(p_i,a)}$;
\item for each $i=0,1,2$, $\tilde{g}|_{B(p_i,b)}$ is a translation of $B(p_i,b)$ onto $B(g(p_i),b)$;
\item on each annulus $B(p_i,a)\setminus B(p_i,b)$, $\tilde{g}$ is defined as the quasiconformal extension of $\tilde{g} : \partial B(p_i,a)\cup \partial B(p_i,b) \to g(\partial B(p_i,a)) \cup\partial B(g(p_i),b)$ given by Sullivan's Annulus Theorem \cite[Theorem 3.17]{TV81}.
\end{enumerate}

Clearly $\tilde{g}$ extends to a quasiregular map $\S^n \to \S^n$ that, by slight abuse of notation, we still call $\tilde{g}$.
Finally, define $G: \S^n  \to \S^n $ by $G = \Phi\circ \tilde{g}$ where $\Phi: \S^n \to \S^n$ is the conformal inversion that maps $\partial B(p_0,b)$ onto itself.

By construction, $f|_{B(p_0,b)}$ is conformal and hence if an orbit ever ends up in $B(p_0,b)$ it stays there. This is called a conformal trap. It turns out that the only way an orbit does not end up in $B(p_0,b)$ is if it stays in $B(p_1,b) \cup B(p_2,b)$. However, $f$ is also conformal on this set. Hence any orbit is obtained by 
\begin{enumerate}
\item either always applying a conformal map, 
\item or applying finitely many conformal maps, then a map with distortion and then conformal maps from there on.
\end{enumerate}

It follows that $G$ is UQR, the Julia set of $G$ is a tame Cantor set contained in $B(p_1,b)\cup B(p_2,b)$ (see \cite{MP}) and that $\mathcal{B}(G) = \mathcal{B}(g) = (\{(0,0)\}\times\R^{n-2}) \cup \{ \infty \}$. Finally, if $x \in \mathcal{B}(G)$, then $\tilde{g}(x)= x$ and $G(x) \in B(p_0,b)$. On the other hand, for any $x\in B(p_0,b)$, we have $G(x) \in B(p_0,b)$. Therefore, $$\mathcal{P}(G) \subset B(p_0,b) \cup (\{(0,0)\}\times\R^{n-2}) \cup \{ \infty \}$$ and $G$ is hyperbolic.
\end{proof}

\begin{proof}[{Proof of Theorem \ref{thm:UD-hyperbolic}}]
Let $F: \R^{N} \to \R^N$ be the quasiconformal map from Theorem \ref{thm:MM}. Clearly $F$ extends to a quasiconformal map $\S^N \to \S^N$ that, again by slight abuse of notation, we still call $F$.

By Lemma \ref{lem:Cantorset}, there exists a non-injective UQR map $G:\S^{N} \to \S^N$ such that $J(G) = \mathcal{C}$ and $G^{-1}(\infty) = \infty$. Define now $f:\R^N \to\R^N$ with $f = F\circ G \circ F^{-1}$. Since $f^{k} = F\circ G^{k} \circ F^{-1}$, it is clear that $f$ is non-injective and UQR. It is immediate that $J(f) = F(J(G)) = F( \mathcal{C}) = E$.

Moreover, $\mathcal{B}(f) = F(\mathcal{B}(G))$ and it follows that $\mathcal{P}(f) = F(\mathcal{P}(G))$. Therefore, since $\mathcal{P}(G)\cap \mathcal{B}(G) = \emptyset$, it follows that $\mathcal{P}(f)\cap \mathcal{B}(f) = \emptyset$ and $f$ is hyperbolic. 
\end{proof}

\section{Self-similar tame Cantor sets in dimension three}
\label{sec:3}

In this section, we discuss when self-similar tame Cantor sets in $\R^3$ are ambiently quasiconformal to $\mathcal{C}$, or not, as the case may be.

\begin{proof}[Proof of Theorem \ref{thm:UD-R3}]
Suppose that $X$ is the attractor of an IFS $\{\phi_1,\dots,\phi_n\}$ satisfying the strong ball open set condition. Let $\mathcal{C}_n$ be the Cantor set which is the attractor of the IFS $\{\psi_1,\dots,\psi_n\}$ where
\[ \psi_i(x,y,z) = \frac{1}{2n-1} (x+ 2i-2, y, z).\]
For $w=i_1\cdots i_k \in\{1,\dots,n\}^k$ we denote $\phi_w = \phi_{i_1}\circ\cdots\circ \phi_{i_k}$ and $\psi_w = \psi_{i_1}\circ\cdots\circ \psi_{i_k}$.

By the David-Semmes uniformization theorem, $\mathcal{C}_n$ is quasisymmetrically homeomorphic to $\mathcal{C}$ and by \cite[Theorem 1.1]{Vext} and Ahlfors extension \cite{Ah64}, there exists a quasiconformal homeomorphism of $\R^3$ that maps $\mathcal{C}_n$ onto $\mathcal{C}$. Therefore, to finish the proof, we construct a quasiconformal homeomorphism $F:\R^3 \to \R^3$ such that $F(X) = \mathcal{C}_n$. Let $B'$ be the ball with center $(\frac12,0,0)$ and radius $5/6$. Then $\mathcal{C}_n \subset B'$ and the balls $\psi_1(\overline{B'}),\dots, \psi_n(\overline{B'})$ are mutually disjoint and all contained contained in $B'$.

Let $B \subset \R^3$ be the image of $\mathbb{B}^3$ under a homeomorphism of $\R^3$ such that $\phi_{1}(\overline{B}), \dots, \phi_{n}(\overline{B})$ are mutually disjoint and contained in $B$. Since orientation preserving homeomorphisms of $\R^3$ can be approximated by orientation preserving diffeomorphisms \cite{Mun60}, there exists a topological ball $B'' \subset B$ with smooth boundary such that the Hausdorff distance between $\partial B$ and $\partial B''$ satisfies
\[ \dist_H(\partial B, \partial B'') < \min\{\dist(\partial \phi_i(B),\partial B) : i=1,\dots,n\}.\]
Then, the balls $\phi_i(\overline{B})$ are all contained in $B''$, so the balls $\phi_1(\overline{B''}),\dots,\phi_n(\overline{B''})$ are disjoint and are contained in $B''$. Note that the set
\[ K= \bigcap_{k=1}^{\infty} \bigcup_{w \in \{1,\dots,n\}^k}\phi_{w} (\overline{B''})\]
is compact and invariant under the IFS $\{\phi_1,\dots,\phi_n\}$ so by the uniqueness of the attractor \cite{Hutchinson}, $K=X$; so $X \subset B''$.

Define $f:\partial B''\cup \bigcup_{i=1}^n\phi_i(\partial B') \to \partial B'\cup \bigcup_{i=1}^n\psi_i(\partial B')$ so that $f|_{\partial B''} : \partial B'' \to \partial B'$ is an orientation preserving diffeomorphism and for each $i=1,\dots,n$, 
\[ f|_{\phi_i(\partial B'')} = \psi_i \circ f|_{\partial B''}\circ\phi_{i}^{-1}|_{\phi_i(\partial B'')}.\] 
We claim that there exists a quasiconformal extension 
\[ F : \overline{B''} \setminus \bigcup_{i=1}^n\phi_i(B'') \to \overline{B'} \setminus \bigcup_{i=1}^n\psi_i(B').\] 
Assuming the claim, we can extend $F$ quasiconformally to $\overline{B''} \setminus X$ by setting 
\[ F|_{\phi_w(\overline{B''}) \setminus \bigcup_{i=1}^n\phi_{wi}(B'')} = \psi_w\circ F_{\overline{B''} \setminus \bigcup_{i=1}^n\phi_{i}(B'')} \circ \phi_w^{-1}, \quad\text{for $w\in \{1,\dots,n\}^k$}.\]
Moreover, we can extend $F$ quasiconformally to $\R^3 \setminus B''$ by Ahlfors extension theorem \cite{Ah64}. Now, by a theorem of V\"ais\"al\"a for removable singularities \cite[Theorem 35.1]{Vais71}, $F$ extends quasiconformally to $\R^3$ and maps $X$ onto $\mathcal{C}_n$.

To prove the claim, let $Q''$, $\D''$, $Q'$, $\D'$, $Q_1'',\dots,Q_n''$, $\D_1'',\dots,\D_n''$, $Q_1',\dots,Q_n'$, $\D_1',\dots,\D_n'$ be open cubes in $\R^3$ with the following properties:
\begin{enumerate}
\item $\overline{B''} \cup \overline{\D''} \subset Q''$ and $\overline{B'} \cup\overline{\D'} \subset Q'$;
\item for each $i\in\{1,\dots,n\}$ we have 
\[ \overline{Q_i''} \subset \phi_i(B'') \cap \D_i'' \subset \overline{\D_i''} \subset \D'' \quad\text{and}\quad \overline{Q_i'} \subset B_i' \cap \D_i' \subset \overline{\D_i'} \subset \D'.\] 
\end{enumerate}
We now construct two quasiconformal maps
\[ G: \overline{B''}\setminus \bigcup_{i=1}^n \phi_i(B'') \to \overline{Q''}\setminus \bigcup_{i=1}^n Q_i'', \qquad G': \overline{B'}\setminus \bigcup_{i=1}^n \psi(B') \to \overline{Q'}\setminus \bigcup_{i=1}^n Q_i'.\]
Assuming we have these maps, we set 
\[ h: \partial Q''\cup \bigcup_{i=1}^n \partial Q_i'' \to \partial Q' \cup\bigcup_{i=1}^n \partial Q_i' \qquad\text{with } h = G'\circ f \circ G^{-1}.\]
Applying Sullivan's Annulus Theorem, we can extend $h$ to
\[ h: (\overline{Q''}\setminus \D'')\cup \bigcup_{i=1}^n (\overline{\D_i''}\setminus Q_i'') \to (\overline{Q'}\setminus \Delta')\cup \bigcup_{i=1}^n (\overline{\D_i'}\setminus Q_i')\]
so that $h|_{\partial \D''}$ is a similarity mapping $\partial\D''$ onto $\partial \D'$ and for each $i\in\{1,\dots,n\}$, $h|_{\partial \D_i''}$ is a similarity mapping $\partial\D_i''$ onto $\partial \D'_i$. By Proposition 4.8 in \cite{Vext}, there exists a quasiconformal extension of $h$
\[ H : \overline{Q''}\setminus \bigcup_{i=1}^n Q_i'' \to \overline{Q'}\setminus \bigcup_{i=1}^n Q_i'\]
and we can set $F = (G')^{-1}\circ H \circ G$.

It remains to construct the maps $G,G'$. We only work for $G$; the construction of $G'$ is similar. Let $D'', D_1'',\dots,D_n''$ be balls with smooth boundary such that $\overline{D''} \subset B''$, and for every $i=1,\dots,n$, $\phi_i(\overline{B}'') \subset D_i'' \subset \overline{D_i''} \subset D''$. Define now $G$ as follows: 
\begin{enumerate}
\item $G|_{\partial B''}: \partial B'' \to \partial Q''$ is an orientation preserving diffeomorphism, $G|_{\partial D''}: \partial D'' \to \partial D''$ is the identity and $G|_{\overline{B''}\setminus D''} : \overline{B''}\setminus D'' \to \overline{Q''}\setminus B''$ is the quasiconformal extension of the latter two diffeomorphisms given by Sullivan's Annulus Theorem;
\item $G|_{\phi_i(\partial B'')}: \phi_i(\partial B'') \to \partial Q_i''$ is an orientation preserving diffeomorphism, $G|_{\partial D_i''}: \partial D_i'' \to \partial D_i''$ is the identity and $G|_{\overline{D_i''}\setminus \phi_i(B'')} : \overline{D_i''}\setminus \phi_i(B'') \to \overline{D_i''}\setminus Q_i''$ is the quasiconformal extension of the latter two diffeomorphisms given by Sullivan's Annulus Theorem;
\item $G|_{\overline{D''}\setminus \bigcup_{i=1}^nD_i''} :\overline{D''}\setminus \bigcup_{i=1}^n D_i''\to \overline{D''}\setminus \bigcup_{i=1}^n D_i''$ is the identity. \qedhere
\end{enumerate} 
\end{proof}

\begin{proof}[Proof of Proposition \ref{prop:qss}]
Let $\mathcal{F} = \{\phi_i : \R^3 \to \R^3\}_{i=1}^{n}$ be the IFS generating the Antoine necklace. In particular, there exists a closed solid torus $T \subset \mathbb{B}^3$ such that the tori $\phi_i(T)$ are mutually disjoint, are contained in the interior of $T$, have the same diameter and form a chain inside $T$ with $\phi_i(T)$ linked with $\phi_{i+1}(T)$ for all $i\in\{1,\dots,n-1\}$, and $\phi_n(T)$ linked with $\phi_1(T)$; see \cite[\textsection 3.1]{FlWu} for a precise description. Let also $\{\phi_i': \R^3 \to \R^3\}_{i=1}^{n}$ be contracting similarities such that the closed balls $\phi_i'(\overline{\mathbb{B}^3})$ are mutually disjoint, have equal diameters and are contained in the interior of $T$.

For each finite word $w$ made up of letters in $\{1,\dots,n\}$ we construct a similarity $\psi_w$ in an inductive manner. For each $i\in\{1,\dots,n\}$, define $\psi_i = \phi_i'$. Inductively, assume that for some $k\in\N$ and some word $w$ in $\{1,\dots,n\}^{k}$ we have defined $\psi_w$. 
\begin{itemize}
\item If $k+1 =2^m$ for some $m\in\N$, then for any $i \in \{1,\dots,n\}$ set $\psi_{wi} = \psi_w \circ \phi_{i}'$.
\item If $k+1 \neq 2^m$ for some $m\in\N$, then for any $i \in \{1,\dots,n\}$ set $\psi_{wi} = \psi_w \circ \phi_{i}$.
\end{itemize}
Let $X$ be the Cantor set defined as
\[ X = \bigcap_{k=1}^{\infty}\bigcup_{w\in\{1,\dots,n\}^k}\psi_w(T)\]
It is straightforward to check that $X$ is compact, uniformly perfect and uniformly disconnected. Moreover, 
\[ X = \bigcap_{m =1}^{\infty} \bigcup_{w \in \{1,\dots,n\}^{2^m}}  \psi_w(\overline{\mathbb{B}^3})\]
with balls $\{\psi_w(\overline{\mathbb{B}^3})\}_{w \in \{1,\dots,n\}^{2^m}}$ being mutually disjoint. Thus, $X$ is tame. 

For a contradiction, assume that there exists a $K$-quasiconformal map $f:\R^3 \to \R^3$ such that $f(X) = \mathcal{C}\subset \R\times \{(0,0)\}$. Let $k=2^m$, let $w\in \{1,\dots,n\}^k$, and let $A_1=\psi_{w1}(X)$ and $A_2=\psi_{w2}(X)$. Recall that by our construction, $\psi_{w1}(T)$ is linked with $\psi_{w2}(T)$. Note that 
\[ \diam{A_1} = \diam{A_2} = C_0\dist(A_1,A_2).\]
for some universal $C_0>0$. By the quasisymmetry of $f$, there exists $C>1$ depending only on $K$ such that 
\[ C^{-1}\diam{f(A_1)} \leq \dist(f(A_1),f(A_2)) \leq C\diam{f(A_1)}\]
\[C^{-1}\diam{f(A_1)} \leq \diam{f(A_2)} \leq C\diam{f(A_1)}.\]

Note that both $f(A_1)$ and $f(A_2)$ are contained in the line $\R\times\{(0,0)\}$. For each $i=1,2$, fix $x_i \in f(A_i)$ and let $E_i$ be the union of all line segments joining the point $(x_i, (-1)^i\diam{A_i},0)$ with the elements of $f(A_i)$. For each $i=1,2$, let $B_i$ by the $\e$-neighborhood of $E_i$ with $\e = \frac1{4}\dist(f(A_1),f(A_2))$. Then, $B_1$ and $B_2$ are topological balls in $\R^3$ that contain $f(A_1)$ and $f(A_2)$, respectively, such that for $i=1,2$ and for all $x\in f(A_i)$
\[ (C^{-1}/4)\diam{f(A_1)} \leq \dist(x,\partial B_i) \leq (C/4)\diam{f(A_1)}.\]

By the quasisymmetry of $f^{-1}$, there exists $C'>1$ depending only on $K$, and there exist two mutually disjoint topological balls $B_1' = f^{-1}(B_1)$ and $B_2' = f^{-1}(B_2)$ such that for $i=1,2$, $A_i \subset B_i'$, and for all $x\in A_i$
\[ (C')^{-1}\diam{A_1} \leq \dist(x,\partial B_i') \leq C'\diam{A_1}.\]

However, assuming that $k$ is sufficiently large, there exists $l \in \{ k+2, \dots, 2k -1\}$ such that for all words $u \in\{1,\dots,n\}^{l-k-1}$ each $i\in\{1,2\}$ and all $x\in \partial \psi_{wiu}(T)$
\[ \dist(x,A_i) < (C')^{-1}\diam{A_1}.\]
Define now 
\[ M_1 = \bigcup_{u \in\{1,\dots,n\}^{l-k-1}} \psi_{w1u}(T) \qquad\text{and}\qquad M_2 = \bigcup_{u \in\{1,\dots,n\}^{l-k-1}} \psi_{w2u}(T).\]
On the one hand, by construction, $M_1$ is linked with $M_2$ in $\R^3$. On the other hand, $M_1 \subset B_1'$ and $M_2\subset B_2'$ so they are unlinked in $\R^3$; a contradiction.
\end{proof}

\bibliography{UD-bib}
\bibliographystyle{amsbeta}

\end{document}